\newcommand{\qed}{$\;\;\;\Box$}
\newenvironment{proof}{\par\smallbreak{\sl\bf Proof.~}}
{\unskip\nobreak\hfill \qed \par\medbreak}
\newcounter{claim}
\renewcommand{\theclaim}{\arabic{claim}}
{\par\medskip\par}
\newcommand{\D}{{\cal D}}
\newcommand{\N}{{\mathbb N}}
\newcommand{\R}{{\mathbb R}}
\newcommand{\K}{{\mathbb K}}
\newcommand{\CC}{{\cal C}}
\newcommand{\KK}{{\cal K}}
\newcommand{\LL}{{\cal L}}
\newcommand{\beq}{\begin{equation}}
\newcommand{\ee}{\end{equation}}
\renewcommand{\d}{\partial}
\newtheorem{thm}{Theorem}[section]
\newtheorem{lem}[thm]{Lemma}
\newtheorem{defn}[thm]{Definition}
\newtheorem{rem}[thm]{Remark}
\newcommand{\al}{\alpha}
\newcommand{\be}{\beta}
\newcommand{\ga}{\gamma}
\newcommand{\eps}{\varepsilon}
\newcommand{\vphi}{\varphi}
\newcommand{\om}{\tau}
\newcommand{\reff}[1]{(\ref{#1})}      
\title{Time-Periodic  Second-Order Hyperbolic Equations: \\
Fredholmness, Regularity, and Smooth Dependence
} 
\newcounter{thesame}
\author{
I.~Kmit
 \ \ \ L.~Recke\\
{\small
Institute of Mathematics, Humboldt University of Berlin,}
\\
{\small Rudower Chaussee 25, D-12489 Berlin, Germany }
\\
{\small
and Institute for Applied Problems of Mechanics and Mathematics, }
\\
{\small
Ukrainian Academy of Sciences,  Naukova St.\ 3b, 79060 Lviv,
Ukraine 
}
\\
{\small   E-mail:
{\tt kmit@informatik.hu-berlin.de}}\\[5mm]
{\small
Institute of Mathematics, Humboldt University of Berlin,}\\
{\small 
Rudower Chaussee 25, D-12489 Berlin, Germany}\\
{\small   E-mail:
{\tt recke@mathematik.hu-berlin.de}}
}
\date{}
\begin{document}

\maketitle

\begin{abstract}
\noindent
The paper concerns the general linear one-dimensional second-order 
hyperbolic equation
$$
\partial^2_tu  - a^2(x,t)\partial^2_xu + a_1(x,t)\d_tu + a_2(x,t)\d_xu + a_3(x,t)u=f(x,t), \quad x\in(0,1)
$$
with periodic conditions in time and Robin boundary conditions in space. 
Under a non-resonance condition 
(formulated in terms of the coefficients $a$, $a_1$, and  $a_2$) ruling out 
the small divisors effect, we  prove the  Fredholm alternative.
Moreover, we show that the solutions have  higher regularity if the data have higher regularity and
if additional non-resonance  conditions are fulfilled. Finally, we 
state a result about smooth dependence 
on the data, where perturbations of the  coefficient $a$ lead to the known loss of smoothness while 
perturbations of the coefficients $a_1$, $a_2$, and $a_3$ do not.
\end{abstract}

\emph{Key words:} second-order hyperbolic equation, periodic conditions
in time, Robin conditions in space, non-resonance conditions,
Fredholm alternative, regularity of solutions, 
smooth dependence on the data

\emph{Mathematics Subject Classification:} 35B10,  35B30, 35B65, 35L20

\section{Introduction}\label{sec:intr}

\subsection{Problem setting and main results}\label{sec:setting}

We address the questions of Fredholm solvability, regularity of solutions and  smooth dependence
on the data
 for the general linear one-dimensional second-order
hyperbolic equation
\begin{equation}\label{1}
\partial^2_tw  - a^2(x,t)\partial^2_xw + a_1(x,t)\d_tw + a_2(x,t)\d_xw + a_3(x,t)w=f(x,t), \quad 
x\in(0,1)
\end{equation}
subjected to periodic conditions in time
\begin{equation}\label{2}
\begin{array}{rcl}
w(x,t)&=&w(x,t+T),\quad 
x\in(0,1),
\end{array}
\end{equation}
and Robin boundary conditions in space
\begin{equation}\label{3}
\begin{array}{rcl}
\d_xw(0,t)&=&r_0(t)w(0,t),\\
\d_xw(1,t)&=&r_1(t)w(1,t).
\end{array}
\end{equation}
Here $T>0$ is a fixed real number. 
The 
functions $a,a_1,a_2,a_3,f: [0,1]\times \R \to \R$ 
and $r_0,r_1: \R \to \R$ are supposed to be 
$T$-periodic with respect to $t$ and to satisfy 
\beq
\label{a}
a(x,t)>0 \;\mbox{ for all } x \in [0,1] \mbox{ and }  t \in \R
\ee
and $\int_0^Ta(0,t)r_0(t)\,dt\ne 0$ or $\int_0^Ta(1,t)r_1(t)\,dt\ne 0$.
Without loss of generality, throughout the paper we will assume that
\beq
\label{ar}
\int_0^Ta(0,t)r_0(t)\,dt\ne 0.
\ee

We will simply write $C_T^l$ for the Banach space 
of  $T$-periodic in $t$ and $l$-times continuously differentiable functions $u : [0,1]\times\R\to\R$,
with the usual norm
\beq\label{norm}
\|u\|_\infty+\sum_{i=1}^l\left(\|\d_x^iu\|_\infty+\|\d_t^iu\|_\infty\right),
\ee
where
\beq\label{norm0}
\|u\|_\infty=\max_{0 \le x \le 1} \;\max_{t \in \R}|u(x,t)|.
\ee
Moreover, we let $\CC^l=C_T^l\times C_T^l$. The norm in $\CC^l$ is again given 
by \reff{norm}--\reff{norm0}  but $|\cdot|$ in \reff{norm0} is now used to denote the Euclidean norm in $\R^2$.
Also, by $C_T^l\left(\R\right)$ we will denote the Banach space of $T$-periodic  and
 $l$-times continuously differentiable functions $u : \R\to\R$. Similarly, let $C_T^\infty$ (resp., 
$C_T^\infty(\R)$)
 denote
the space of  $T$-periodic in $t$ and  infinity differentiable functions  $u : [0,1]\times\R\to\R$
 (resp.,  $u : \R\to\R$).

The problem \reff{1}--\reff{3} can be written as a problem for  a first-order
hyperbolic integro-differential system. Indeed, set $u=(u_1,u_2)$ and 
\beq\label{const_C}
C=\int_0^Ta(0,t)r_0(t)\,dt
\ee
and introduce linear bounded operators $N: \CC\mapsto \R$,  
$I,G: \CC\mapsto C_T(\R)$ and 
$J,F: \CC\mapsto \CC$ by
\begin{eqnarray}
\label{I}
[Iu](t)&=& \int_0^t\frac{u_1(0,\tau)+u_2(0,\tau)}{2}\,d\tau,\\
\label{J}
[Ju](x,t)&=& \int_0^x\frac{u_1(\xi,t)-u_2(\xi,t)}{2a(\xi,t)}\,d\xi,\\
\label{N}
Nu&=& \frac{1}{C}\int_0^T\left(\frac{u_1(0,t)-u_2(0,t)}{2}-a(0,t)r_0(t)[Iu](t)\right)\,dt,\\
\label{G}
[Gu](t)&=& [Iu](t)+Nu,\\
\label{F}
[Fu](x,t)&=& [Gu](t)+[Ju](x,t).
\end{eqnarray}
Moreover, we introduce the following notation:
\beq\label{bij}
\begin{array}{cc}
\displaystyle
b_{11}=\frac{a_1}{2}+\frac{a_2}{2a}+\frac{a\d_xa-\d_ta}{2a},\quad 
b_{12}=\frac{a_1}{2}-\frac{a_2}{2a}+\frac{a\d_xa-\d_ta}{2a},\\\displaystyle
b_{21}=\frac{a_1}{2}+\frac{a_2}{2a}+\frac{a\d_xa+\d_ta}{2a},\quad
b_{22}=\frac{a_1}{2}-\frac{a_2}{2a}-\frac{a\d_xa+\d_ta}{2a}.
\end{array}
\ee
In the new unknowns 
\beq\label{u_sol}
u_1=\partial_tw  + a(x,t)\partial_xw,\quad u_2=\partial_tw  - a(x,t)\partial_xw
\ee
the problem \reff{1}--\reff{3} reads as follows:
\beq\label{1s}
\begin{array}{ll}
\displaystyle
\partial_tu_1  - a(x,t)\partial_xu_1 + b_{11}(x,t)u_1+ b_{12}(x,t)u_2 = f(x,t)-[a_3Fu](x,t)  \\\displaystyle
\partial_tu_2  + a(x,t)\partial_xu_2 + b_{21}(x,t)u_1+ b_{22}(x,t)u_2 = f(x,t)-[a_3Fu](x,t), 
\end{array}
\ee
\beq\label{2s}
u_j(x,t) = u_j(x,t+T),\quad   j=1,2,
\ee
\beq\label{3s}
\begin{array}{rcl}
\displaystyle
u_1(0,t) &=&\displaystyle u_2(0,t)+2a(0,t)r_0(t)[Gu](t),\\
\displaystyle
u_2(1,t) &=&\displaystyle u_1(1,t)-2a(1,t)r_1(t)[Fu](1,t).
\end{array}
\ee
It is not difficult to check (see Section~\ref{sec:equival}) that the problems  \reff{1}--\reff{3} and 
 \reff{1s}--\reff{3s}
are equivalent in the sense of the classical solvability, namely, 
that any classical solution to  \reff{1}--\reff{3}
produces a  classical solution to \reff{1s}--\reff{3s} by means of the formula 
\reff{u_sol} and, vice versa, 
 any  classical solution to  \reff{1s}--\reff{3s} produces a classical solution to 
 \reff{1}--\reff{3} by means of the formula
\beq\label{w_sol}
w(x,t)=[Iu](t)+[Ju](x,t)+Nu.
\ee

We will work with the concepts of a  weak (continuously differentiable) solution to
\reff{1}--\reff{3} and  a  weak (continuous) solution to
 \reff{1s}--\reff{3s}, based on the integration along characteristics.
In order to switch to the weak formulations, 
let us introduce characteristics of the system \reff{1s}. 
Given  $j=1,2$, $x \in [0,1]$, and $t \in \R$, the $j$-th characteristic  is defined as 
the solution 
$\xi\in [0,1] \mapsto \om_j(\xi,x,t)\in \R$  of the initial value problem
\beq\label{char}
\partial_\xi\om_j(\xi,x,t)=\frac{(-1)^j}{a(\xi,\om_j(\xi,x,t))},\;\;
\om_j(x,x,t)=t.
\ee
In what follows we will write
\begin{eqnarray}
\label{cdef}
c_j(\xi,x,t)&=&\exp \int_x^\xi(-1)^j
\left(\frac{b_{jj}}{a}\right)(\eta,\om_j(\eta,x,t))\,d\eta,\\
\label{ddef}
d_j(\xi,x,t)&=&\frac{(-1)^jc_j(\xi,x,t)}{a(\xi,\om_j(\xi,x,t))}.
\end{eqnarray}
Due to the method of characteristics, a $C^1$-map $u:[0,1]\times \R \to \R^2$ is a solution to 
the  problem \reff{1s}--\reff{3s} if and only if
it satisfies the following system of integral equations
\begin{eqnarray}
\label{rep1}
\lefteqn{
u_1(x,t)=c_1(0,x,t)\Big[u_2(0,\om_1(0,x,t))}\nonumber\\
&&
+2a(0,\om_1(0,x,t))r_0(\om_1(0,x,t))[Gu](\om_1(0,x,t))\Big]
\nonumber\\
&&-\int_0^x d_1(\xi,x,t) b_{12}(\xi,\om_1(\xi,x,t))u_2(\xi,\om_1(\xi,x,t))d\xi\nonumber\\ 
&&+\int_0^x d_1(\xi,x,t)\Bigl(f(\xi,\om_1(\xi,x,t))-[a_3Fu](\xi,\om_1(\xi,x,t))\Bigr)\,d\xi,
\end{eqnarray}
\begin{eqnarray}
\label{rep2}
\lefteqn{
u_2(x,t)=c_2(1,x,t)\Big[
 u_1(1,\om_2(1,x,t))}\nonumber\\
&&
-2a(1,\om_2(1,x,t))r_1(\om_2(1,x,t))[Fu](1,\om_2(1,x,t))
\Big]
\nonumber\\
&&-\int_1^x d_2(\xi,x,t) b_{21}(\xi,\om_2(\xi,x,t))u_1(\xi,\om_2(\xi,x,t))\,d\xi\nonumber\\ 
&&+\int_1^x d_2(\xi,x,t)\Bigl(f(\xi,\om_2(\xi,x,t))-[a_3Fu](\xi,\om_2(\xi,x,t))\Bigr)\,d\xi.
\end{eqnarray}
As it follows from Section~\ref{sec:equival},  
if $(u_1,u_2)$ is a continuous vector-function satisfying the system 
\reff{rep1}--\reff{rep2}, then
 the function $w$  given by  \reff{w_sol} is continuously differentiable.
Hence, the notion of a weak solution to the problem   \reff{1}--\reff{3} 
can be naturally defined as follows:
\begin{defn}\label{defn:cont}
(i) A function $u\in\CC$ is called a weak solution to  \reff{1s}--\reff{3s} 
if it  satisfies \reff{rep1} and \reff{rep2}.

(ii) Let $u$ be a weak solution to \reff{1s}--\reff{3s}. Then the continuously differentiable 
function $w$ given by the formula  \reff{w_sol} is called a weak solution to 
 \reff{1}--\reff{3}.
\end{defn}

 Denote by  
$\KK_w$ the vector space of all  weak solutions to \reff{1}--\reff{3} with $f=0$. 
We are prepared to state the Fredholm alternative theorem.

\begin{thm}\label{thm:Fredh} Suppose 
\reff{a} and \reff{ar}. Moreover, assume that
\begin{eqnarray}\label{smooth}
a\in C_T^2,\quad a_1,a_2\in C_T^1,\quad
a_3\in C_T,\quad r_0,r_1\in C_T^1\left(\R\right)
\end{eqnarray}
and either
\beq
\label{small}
 \int_0^1\left[
\left(\frac{b_{11}}{a}\right)(\eta,\om_1(\eta,1,t))+
\left(\frac{b_{22}}{a}\right)(\eta,\om_2(\eta,0,\om_1(0,1,t)))\right]\,d\eta
\ne 0 
\ee
or
\beq
\label{small+}
\int_0^1\left[
\left(\frac{b_{22}}{a}\right)(\eta,\om_2(\eta,0,t))+
\left(\frac{b_{11}}{a}\right)(\eta,\om_1(\eta,1,\om_2(0,1,t)))\right]\,d\eta
\ne 0 
\ee
for all $t$.
Then the following is true:

(i) $\dim \KK_w <\infty$.

(ii) The  space  of all  $f \in  C_T$ such that there exists a weak solution 
to~(\ref{1})--(\ref{3}) is a closed subspace of codimension $\dim \KK_w$ in $C_T$.

(iii)  Either  $\dim \KK_w>0$ or for any $f \in C_T$ 
there exists exactly one weak solution $w$
to~(\ref{1})--(\ref{3}). 
\end{thm}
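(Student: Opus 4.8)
The plan is to reduce the problem to an operator equation $u = Mu + g$ on the Banach space $\CC = C_T \times C_T$, where $M$ collects all the ``lower-order'' terms in the integral representation \reff{rep1}--\reff{rep2} (the interior integrals involving $b_{12}$, $b_{21}$, $a_3Fu$, and the boundary contributions through $G$, $F$), and $g$ collects the $f$-dependent terms. A weak solution to \reff{1s}--\reff{3s}, hence to \reff{1}--\reff{3}, then corresponds exactly to a fixed point of $u \mapsto Mu + g$, and $\KK_w$ is (isomorphic to) $\ker(\mathrm{Id} - M)$. To obtain the Fredholm alternative it then suffices to prove that $M$, or some iterate $M^k$, is a compact operator on $\CC$: then $\mathrm{Id} - M$ is Fredholm of index zero, which gives (i) $\dim\ker(\mathrm{Id}-M) < \infty$, (ii) the range is closed of codimension equal to $\dim\ker(\mathrm{Id}-M)$, and (iii) the usual alternative. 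The regularity assumptions \reff{smooth} guarantee that the coefficients $c_j$, $d_j$, $b_{ij}$ and the characteristics $\om_j$ are well-defined and continuous, so $M$ is at least a bounded operator.

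The structure of $M$ splits naturally into two parts. The integral operators $I$, $J$ (hence $G$, $F$, $N$) and the interior integral terms $\int_0^x d_1 b_{12}(\cdot)u_2\,d\xi$, $\int_1^x d_2 b_{21}(\cdot)u_1\,d\xi$, $J$ applied inside $a_3Fu$, all involve genuine integration in $x$ or $t$; these are compact on $\CC$ by an Arzel\`a--Ascoli argument, since integration gains a derivative and the integrands depend continuously on the parameters $(x,t)$ with bounded difference quotients on the compact domain. The problematic part is the boundary-trace superposition: the terms $c_1(0,x,t)\,u_2(0,\om_1(0,x,t))$ and $c_2(1,x,t)\,u_1(1,\om_2(1,x,t))$ are \emph{not} compact individually — they merely substitute traces along characteristics, an isometry-type operation. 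This is exactly where the non-resonance conditions \reff{small}/\reff{small+} must enter.

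The key step, and the main obstacle, is to show that after iterating the representation twice — substituting the formula for $u_1(1,\cdot)$ into the formula for $u_2(1,\om_2(1,x,t))$ appearing in \reff{rep1}, and symmetrically — the composed boundary operator on $(u_1(0,\cdot), u_2(1,\cdot))$ (or the reduced one-dimensional trace operator on $\R$) becomes a \emph{contraction} in a suitably chosen equivalent norm, precisely when the exponential factors accumulated along a full ``loop'' of characteristics (from $x=1$ back to $x=1$ via $x=0$) have modulus bounded away from $1$. The integrals in \reff{small} and \reff{small+} are exactly $-\log$ of these loop factors $c_1(0,1,t)\,c_2(0,1,\om_1(0,1,t))$ etc., so the non-vanishing of the integral for all $t$ is equivalent to the loop factor being uniformly $\ne 1$; combined with continuity and $T$-periodicity this yields a uniform gap, hence a contraction after passing to a renormed space or after finitely many further iterations. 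Once the pure-boundary part is a contraction $B$ with $\normed{B} < 1$, we write $\mathrm{Id} - M = (\mathrm{Id} - B) - K$ with $K$ compact, note $\mathrm{Id} - B$ is boundedly invertible, so $\mathrm{Id} - M = (\mathrm{Id}-B)(\mathrm{Id} - (\mathrm{Id}-B)^{-1}K)$ with $(\mathrm{Id}-B)^{-1}K$ compact; Riesz--Schauder then gives all three claims, the index-zero property yielding the equality of $\dim\KK_w$ with the codimension in (ii) and the dichotomy in (iii). The transfer back from the $u$-picture to the $w$-picture, including that $\KK_w$ has the same dimension as $\ker(\mathrm{Id}-M)$, uses the equivalence stated before Definition~\ref{defn:cont} together with the explicit formulas \reff{u_sol} and \reff{w_sol}.
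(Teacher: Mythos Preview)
Your overall strategy matches the paper's: reduce to the first-order system, write the integral equations as $u = Bu + (A+D)u + Rf$ where $B$ is the pure boundary-trace operator, use the non-resonance condition \reff{small}/\reff{small+} to show $I-B$ is invertible on $\CC$, and then conclude Fredholmness of index zero from a compactness argument on the remainder. Your reading of \reff{small} as the statement that the loop factor $c_1(0,1,t)c_2(1,0,\om_1(0,1,t))$ is uniformly bounded away from $1$ is exactly how the paper's Lemma~\ref{lem:iso} proceeds.

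There is, however, a genuine gap in your compactness step. You assert that the interior integral terms and the operators $J$, $G$, $F$ are compact on $\CC$ because ``integration gains a derivative.'' This is false: these are \emph{partial} integral operators. For instance $[Ju](x,t) = \int_0^x (2a)^{-1}(u_1-u_2)(\xi,t)\,d\xi$ integrates only in $x$ with $t$ frozen; applied to the bounded sequence $u_n(x,t) = (\sin(nt),0)$ it yields a sequence with no uniformly convergent subsequence, so $J$ is not compact. The same failure occurs for the terms $\int_0^x d_1(\xi,x,t)\, b_{12}(\xi,\om_1(\xi))u_2(\xi,\om_1(\xi))\,d\xi$: the characteristic substitution does not mix the variables enough to produce equicontinuity in $t$. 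The paper states this explicitly just before Lemma~\ref{lem:comp}: the operators $A$ and $D$ are not compact from $\CC$ to $\CC$, in general, because they are partial integral operators. Consequently your decomposition $\mathrm{Id} - M = (\mathrm{Id}-B) - K$ with $K$ compact does not hold.

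What the paper proves instead is that the \emph{square} $[(I-B)^{-1}(A+D)]^2$ is compact (Lemma~\ref{lem:comp}), which still yields index-zero Fredholmness by a standard criterion. The mechanism is that when two such partial operators are composed --- each of $A^2$, $D^2$, $AD$, $DA$, $AB$, $DB$ --- one can change variables along characteristics (using the inverse characteristic $\tilde\om_j$ and formulas like \reff{tilde_dx}--\reff{tilde_dt}) and integrate by parts so that the composition maps $\CC$ continuously into $\CC^1$; compactness then follows from Arzel\`a--Ascoli. This smoothing-by-iteration is the substantive work of the proof and is precisely the missing idea in your proposal.
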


\begin{rem}\rm
It follows from non-resonance conditions \reff{small} and \reff{small+} that, 
in general, resonances are defined by coefficients $a$, $a_1$, and $a_2$ of the 
 second- and the full first-order part of the equation (\ref{1}). 
In the particular  case $\d_ta\equiv 0$ conditions \reff{small} and
 \reff{small+} can be written in the form
\beq
 \int_0^1
\frac{b_{11}(\eta)+
b_{22}(\eta)}{a(\eta)}\,d\eta=
\int_0^1
\frac{a(\eta)a_1(\eta)+a(\eta)a^\prime(\eta)}{a^2(\eta)}\,d\eta
\ne 0.\nonumber
\ee
This means that in this case the resonances do not depend on the coefficient $a_2$ 
 any more.
\end{rem}

To formulate a regularity result, we introduce the notation
\begin{eqnarray}
c_j^l(\xi,x,t)&=&\exp \int_x^\xi(-1)^j
\left(\frac{b_{jj}}{a}-l\frac{\d_ta}{a^2}\right)(\eta,\om_j(\eta,x,t))\,d\eta.\nonumber
\end{eqnarray}
Notice that $c_j^0(\xi,x,t)=c_j(\xi,x,t)$.

\begin{thm}\label{thm:reg}
Suppose \reff{a} and \reff{ar}.

 (i)  Given $k\ge 1$, assume that
\beq\label{k-smooth}
\begin{array}{ll}
a\in C_T^{k+1},\quad a_1,a_2,a_3,f\in C_T^{k},\quad
 r_0,r_1\in C_T^{k}\left(\R\right)
\end{array}
\ee
and one of the following conditions is fulfilled:
\begin{eqnarray}\label{small1}
c_1^l(0,1,t)c_2^l(1,0,\om_1(0,1,t))< 1 \mbox{ for all }  
t\in\R  \mbox{ and } 
l=0,1,\dots,k,
\end{eqnarray}
\begin{eqnarray}\label{small11}
c_1^l(0,1,t)c_2^l(1,0,\om_1(0,1,t))> 1 \mbox{ for all } 
t\in\R  \mbox{ and } 
l=0,1,\dots,k,
\end{eqnarray}
\begin{eqnarray}\label{small111}
c_2^l(1,0,t)c_1^l(0,1,\om_2(1,0,t))< 1 \mbox{ for all } 
t\in\R  \mbox{ and } 
l=0,1,\dots,k,
\end{eqnarray}
and 
\begin{eqnarray}\label{small1111}
c_2^l(1,0,t)c_1^l(0,1,\om_2(1,0,t))> 1 \mbox{ for all } 
t\in\R  \mbox{ and } 
l=0,1,\dots,k.
\end{eqnarray}
Then any weak solution to (\ref{1})--(\ref{3}) 
belongs to $C_T^{k+1}$.

(ii) Assume that $a$ is independent of $t$.
Moreover, let $a, a_1, a_2, a_3, f\in C_T^\infty$
and $r_0, r_1\in C_T^\infty(\R)$. If one of the conditions \reff{small}
and  \reff{small+}
is fulfilled, then any weak solution to   (\ref{1})--(\ref{3})
belongs to $C_T^\infty$.
\end{thm}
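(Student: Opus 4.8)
\medskip
\noindent\textbf{Plan of the proof.}
The plan is to transfer the regularity question from $w$ to the associated first-order solution $u=(u_1,u_2)$ and to bootstrap the $t$-regularity of $u$ by differentiating the characteristic representation \reff{rep1}--\reff{rep2}, exploiting that $l$-fold differentiation in $t$ reproduces a problem of exactly the same structure but with $c_j$ replaced by $c_j^l$ --- which explains why the non-resonance conditions in (i) are indexed by $l$. For the transfer, recall from Section~\ref{sec:equival} that a weak solution $w$ of \reff{1}--\reff{3} has the form \reff{w_sol} with $u\in\CC$ continuous and solving \reff{rep1}--\reff{rep2}, that $w\in C_T^1$, and that $\d_tw=(u_1+u_2)/2$, $\d_xw=(u_1-u_2)/(2a)$; since $a\in C_T^{k+1}$ and $a>0$, it suffices to prove $u\in\CC^k$, for then $\d_tw,\d_xw\in C_T^k$ and $w\in C_T^{k+1}$. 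I would get $u\in\CC^k$ by showing $\d_t^{\,l}u\in\CC$ for $l=0,1,\dots,k$ (induction on $l$, with $l=0$ the hypothesis): the remaining mixed derivatives $\d_x^i\d_t^{\,j}u$ ($i+j\le k$) then follow algebraically by differentiating \reff{1s} and solving for $\d_xu_1,\d_xu_2$, each differentiation consuming one $x$-derivative of $a$ --- which is available since $a\in C_T^{k+1}$, this being also why \reff{bij} forces $b_{jj}\in C_T^k$ and hence why $a\in C_T^{k+1}$ rather than $a\in C_T^k$ is assumed. By smooth dependence of the solutions of \reff{char} on $(\xi,x,t)$ and on $a$, assumption \reff{k-smooth} gives $\om_j,c_j^l,d_j\in C^k$; and differentiating \reff{char} in $t$ shows that $\d_t\om_j$ solves $\d_\xi(\d_t\om_j)=-(-1)^j(\d_ta/a^2)(\xi,\om_j)\,\d_t\om_j$ with $\d_t\om_j(x,x,t)=1$, whence $c_j^0(\xi,x,t)\big(\d_t\om_j(\xi,x,t)\big)^l=c_j^l(\xi,x,t)$ --- the identity behind the induction.

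For the inductive step (assume $\d_t^{\,i}u\in\CC$, $i<l$, $1\le l\le k$): differentiating \reff{rep1}--\reff{rep2} $l$ times in $t$ formally shows that $v:=\d_t^{\,l}u$ ought to satisfy a system of the same type as \reff{rep1}--\reff{rep2} with the leading boundary coefficients $c_j$ replaced by $c_j^l$ (by the identity above), with the only further $v$-dependent terms being ``$\int_0^x$''/``$\int_1^x$''-type integral terms, and with an inhomogeneity built from $f$, the coefficients and their derivatives, and the already known $\d_t^{\,i}u$, $i<l$ (differentiating $[Gu]$, $Nu$ produces lower-order $u$-terms, differentiating $[Ju]$ produces an $\int_0^x$-term in $v$). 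To make this rigorous I would work with the difference quotients $z^h:=h^{-1}\big(\d_t^{\,l-1}u(\cdot,\cdot+h)-\d_t^{\,l-1}u\big)$ --- using that $u(\cdot,\cdot+h)$ solves \reff{rep1}--\reff{rep2} with the $h$-shifted data --- and integrate along \reff{char}: each traversal of the characteristic loop $t\mapsto\om_2(1,0,\om_1(0,1,t))$ (or $t\mapsto\om_1(0,1,\om_2(1,0,t))$) rescales the increment by the corresponding product of factors $\d_t\om_j$ and multiplies the amplitude by $c_1^l(0,1,t)c_2^l(1,0,\om_1(0,1,t))$ (respectively $c_2^l(1,0,t)c_1^l(0,1,\om_2(1,0,t))$). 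The $l$-th non-resonance condition (one of \reff{small1}--\reff{small1111}) says precisely that this loop multiplier stays below, or above, $1$ uniformly in $t$: in the first case the associated reflection operator is a contraction, in the second the recursion is run backwards along the loop, and in both cases $I$ minus it is boundedly invertible. Combined with the compactness of the remaining integral terms and a Fredholm-type argument, this gives uniform bounds on the $z^h$ and their uniform convergence as $h\to0$, hence $\d_t^{\,l}u\in\CC$. This closes the induction, so $u\in\CC^k$ and $w\in C_T^{k+1}$, proving (i).

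For (ii): if $a$ is independent of $t$ then $\d_ta\equiv0$, so $c_j^l=c_j^0=c_j$ for every $l$, and a direct computation from \reff{cdef} shows $\ln\big(c_1(0,1,t)c_2(1,0,\om_1(0,1,t))\big)$ equals the integral in \reff{small}, and similarly for the mirror loop in \reff{small+}. Hence, by continuity and $T$-periodicity in $t$, condition \reff{small} (resp.\ \reff{small+}) forces the loop multiplier to be $<1$ for all $t$ or $>1$ for all $t$, i.e.\ one of \reff{small1}, \reff{small11} (resp.\ \reff{small111}, \reff{small1111}) holds for every $l$ at once; since all data lie in $C_T^\infty$, part (i) applies for every $k\ge1$, whence $w\in C_T^\infty$. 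The main obstacle is the inductive step of (i): setting up the $l$-times differentiated representation precisely --- in particular checking that the factor $(\d_t\om_j)^l$ is exactly what upgrades $c_j^0$ to $c_j^l$, so that the governing non-resonance condition is the one indexed by $l$ --- and carrying out the difference-quotient analysis along the characteristic loop, which is delicate precisely because the problem may possess a nontrivial kernel $\KK_w$. Deriving the $x$-derivatives from \reff{1s} and verifying the computations in (ii) are, by contrast, routine.
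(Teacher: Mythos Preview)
Your overall architecture matches the paper's proof closely: reduce to $u$, induct on the order of $t$-regularity, observe that the $l$-th $t$-derivative of $u$ satisfies a system of the same shape with $c_j$ replaced by $c_j^l$ (via the identity $c_j^0(\d_t\om_j)^l=c_j^l$, which you state correctly), recover the $x$-derivatives from \reff{1s}, and derive (ii) from (i) using $\d_ta\equiv0$. The paper does exactly this.

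The inductive step, however, is where your plan diverges and has a real gap. You propose to close it by ``compactness of the remaining integral terms and a Fredholm-type argument''. Neither ingredient is available as stated. The operators $A$ and $D$ (and their $l$-analogues $A_l,D_l$) are \emph{partial} integral operators and are \emph{not} compact from $\CC$ to $\CC$; the paper stresses this explicitly. Moreover, a Fredholm argument cannot yield the needed uniform bounds on the difference quotients when $\KK_w$ may be nontrivial --- a difficulty you flag yourself but do not resolve. The paper sidesteps both issues in one move: it substitutes the equation into itself once, obtaining
\[
v \;=\; B_l v \;+\; \bigl(A_l^2+A_lB_l+A_lD_l+D_lB_l+D_lA_l+D_l^2\bigr)v \;+\; (\text{known terms}),
\]
and proves that each of the six composite operators $A_l^2,\dots,D_lB_l$ is genuinely \emph{smoothing}, i.e.\ maps $\CC$ continuously into $\CC^1$. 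This pushes everything except $B_lv$ to the right-hand side as a known $\tilde\CC^1$ function, so the inductive step reduces to the bijectivity of $I-B_l$ on $\tilde\CC^1$, which is established directly (by a weighted-norm contraction using \reff{small1}, or the reversed recursion for \reff{small11}--\reff{small1111}) with no appeal to Fredholm theory or to the size of $\KK_w$. Your outline is therefore on the right track, but to make it go through you need to replace ``compactness $+$ Fredholm'' by this iteration-and-smoothing step.
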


We finish this section with the theorem describing  smooth dependence of the
solutions on the data. With this aim, by means of $\eps\in[0,1)$, we perform small 
perturbations 
\beq
\begin{array}{cc}
a^\eps(x,t)=a(x,t,\eps), \quad a_1^\eps(x,t)=a_1(x,t,\eps), \quad
 a_2^\eps(x,t)=a_2(x,t,\eps), \nonumber\\
 a_3^\eps(x,t)=a_3(x,t,\eps),\quad f^\eps(x,t)=f(x,t,\eps),\quad r_0^\eps(t)=r_0(t,\eps), \quad r_1^\eps(t)=r_1(t,\eps)\nonumber
\end{array}
\ee
of the coefficients $a(x,t)$, $a_1(x,t)$, $a_2(x,t)$, $a_3(x,t)$, $f(x,t)$,
$r_0(t)$, and $r_1(t)$,  respectively. Below we will also keep the notation
\beq\label{coef_ohne_eps}
\begin{array}{cc}
a(x,t)=a(x,t,0),\quad a_1(x,t)=a_1(x,t,0),\quad a_2(x,t)=a_2(x,t,0),
\nonumber\\
a_3(x,t)=a_3(x,t,0),\quad f(x,t)=f(x,t,0),\quad r_0(t)=r_0(t,0),\quad r_1(t)=r_1(t,0)\nonumber
\end{array}
\ee
for the non-perturbed coefficients. 
Similar notation will be used for the solutions 
\beq\label{sol_eps}
w^\eps(x,t)=w(x,t,\eps), \,\,\, 
u^\eps(x,t)=u(x,t,\eps)  \nonumber  
\ee
of the corresponding perturbed problems.

We are prepared to write down a perturbed problem to \reff{1}--\reff{3}:
\begin{equation}\label{1eps}
\partial^2_tw^\eps  - a^{\eps}(x,t)^2\partial^2_xw^\eps + a_1^\eps(x,t)\d_tw^\eps + 
a_2^\eps(x,t)\d_xw^\eps + a_3^\eps(x,t)w^\eps=f^\eps(x,t), 
\end{equation}
\begin{equation}\label{2eps}
\begin{array}{rcl}
w^\eps(x,t)&=&w^\eps(x,t+T),\\
\d_tw^\eps(x,t)&=&\d_tw^\eps(x,t+T),
\end{array}
\end{equation}
\begin{equation}\label{3eps}
\begin{array}{rcl}
\d_xw^\eps(0,t)&=&r_0^\eps(t)w^\eps(0,t),\\
\d_xw^\eps(1,t)&=&r_1^\eps(t)w^\eps(1,t).
\end{array}
\end{equation}
and the corresponding perturbed problem to \reff{1s}--\reff{3s}:
\beq\label{1s_eps}
\begin{array}{ll}
\partial_tu_1^\eps  - a^\eps(x,t)\partial_xu_1^\eps + b_{11}^\eps(x,t)u_1^\eps+ b_{12}^\eps(x,t)u_2^\eps = 
f^\eps(x,t)-[a_3^\eps F^\eps u^\eps](x,t), \\
\partial_tu_2^\eps  + a^\eps(x,t)\partial_xu_2^\eps + b_{21}^\eps(x,t)u_1^\eps+ b_{22}^\eps(x,t)u_2^\eps = 
f^\eps(x,t)-[a_3^\eps F^\eps u^\eps](x,t), 
\end{array}
\ee
\beq\label{2s_eps}
u_j^\eps(x,t) = u_j^\eps(x,t+T),\; j=1,2,
\ee
\beq\label{3s_eps}
\begin{array}{rcl}
\displaystyle
u_1^\eps(0,t) &=&\displaystyle u_2^\eps(0,t)+2a^\eps(0,t)r_0^\eps(t)[G^\eps u^\eps](t),\\
\displaystyle
u_2^\eps(1,t) &=&\displaystyle u_1^\eps(1,t)-2a^\eps(1,t)r_1^\eps(t)[F^\eps u^\eps](1,t),
\end{array}
\ee
where the functions $b_{ij}^\eps$ and the operators $F^\eps$ and $G^\eps$ are given by
\reff{bij} and \reff{I}--\reff{F} with $a$, $a_1$, $a_2$, $a_3$,
 $r_0$, and $r_1$ replaced by 
$a^\eps$, $a_1^\eps$, $a_2^\eps$, $a_3^\eps$,
$r_0^\eps$,  and $r_1^\eps$, respectively.

\begin{thm}\label{thm:dep} Assume \reff{a} and \reff{ar}.  
Let $\dim  \KK_w=0$. 

{\bf(i)} Given a non-negative integer $k$, suppose
\beq\label{k-smooth_dep}
\begin{array}{ll}
a^\eps\in C^{k+1}\left([0,1];C_T^{k+2}\right),\quad a_1^\eps,a_2^\eps, a_3^\eps,f^\eps\in 
 C^{k+1}\left([0,1];C_T^{k+1}\right), 
\\ r_0^\eps,r_1^\eps\in  C^{k+1}\left([0,1];C_T^k\left(\R\right)\right) 
\end{array}
\ee
and assume that one of the conditions
 \reff{small1},
 \reff{small11},  \reff{small111}, and  \reff{small1111} is fulfilled. 
Then there exists  $\eps_0\le 1$ such that for all  $\eps\le\eps_0$
 there exists a unique weak solution $w^\eps$
to (\ref{1eps})--(\ref{3eps}). Moreover, it holds  $w^\eps\in C_T^{k+1}$, and the map
$
\eps\in[0,\eps_0)\mapsto w^\eps\in C^{k-\ga}
$
is $C^\ga$-smooth for any non-negative integer $\ga\le k$.

{\bf(ii)} Assume that $a^\eps$ is $t$-independent and 
$a^\eps, a_1^\eps, a_2^\eps, a_3^\eps, r_0^\eps, r_1^\eps, f^\eps$  are $C^\infty$-smooth.  Suppose  
one of the conditions \reff{small} and \reff{small+}.
Then there is $\eps_0>0$ such that for all  $\eps\le\eps_0$ 
 there exists a unique  weak solution $w^\eps$
to  (\ref{1eps})--(\ref{3eps}). Moreover, for all $k\in\N$ it holds $w^\eps\in C_T^{k}$,
and the map
$
\eps\in[0,\eps_0)\mapsto w^\eps\in 
C_T^k
$
is $C^\infty$-smooth.
\end{thm}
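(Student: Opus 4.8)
The plan is to combine the Fredholm/invertibility structure behind Theorem~\ref{thm:Fredh}, the regularity transfer of Theorem~\ref{thm:reg}, and a parameter-dependent version of the implicit function theorem adapted to the well-known ``loss of derivatives'' in the coefficient $a^\eps$. First I would recast the perturbed problem \reff{1s_eps}--\reff{3s_eps} as a fixed-point equation $u^\eps = \Phi(\eps, u^\eps)$ in a suitable space, where $\Phi$ collects the characteristic integral representations \reff{rep1}--\reff{rep2} built from $a^\eps, b_{ij}^\eps, F^\eps, G^\eps$. The key point, already exploited in the proof of Theorem~\ref{thm:Fredh}, is that the characteristic-integral operator (superposition along $\om_j^\eps$ plus the boundary reflection terms) is a compact perturbation of a contraction precisely when one of the non-resonance inequalities \reff{small1}--\reff{small1111} holds; since $\dim\KK_w=0$, at $\eps=0$ the linearized operator $I-\d_u\Phi(0,0)$ is boundedly invertible on $C_T$ (indeed on $\CC$). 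By continuity of all data in $\eps$ (guaranteed by \reff{k-smooth_dep}), the same holds for small $\eps$, which yields $\eps_0\le 1$ and the existence and uniqueness of $w^\eps\in C_T$, hence of $u^\eps\in\CC$; the regularity $w^\eps\in C_T^{k+1}$ then follows from Theorem~\ref{thm:reg}(i), whose hypotheses \reff{k-smooth}, \reff{small1}--\reff{small1111} are implied by \reff{k-smooth_dep} and the standing assumption.

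For the smooth-dependence claim I would work in a scale of Banach spaces and track how many $x$- and $t$-derivatives are consumed at each step. The representation formulas involve the characteristics $\om_j^\eps$, which solve the ODE \reff{char} with right-hand side $1/a^\eps$; differentiating $\om_j^\eps$ once in $\eps$ costs one derivative of $a^\eps$, and the coefficients $c_j^\eps, d_j^\eps$ bring in $\d_x a^\eps, \d_t a^\eps$. This is exactly why \reff{k-smooth_dep} asks for $a^\eps\in C^{k+1}([0,1];C_T^{k+2})$ but only $C_T^{k+1}$-regularity for the lower-order coefficients: each $\eps$-derivative of the solution trades one unit of the $x$-regularity parameter, so after $\ga$ differentiations in $\eps$ one lands in $C^{k-\ga}$. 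Concretely, I would apply an implicit-function-theorem argument on the family of maps $\Phi\colon [0,\eps_0)\times C^{k-j}\to C^{k-j}$, showing $\Phi$ is $C^\ga$ as a map into the space with $\ga$ fewer derivatives, and that the partial derivative in $u$ stays invertible (this reduces to the $\eps=0$ invertibility plus a perturbation estimate uniform on the scale). Running the IFT on this scale produces the asserted $C^\ga$-smoothness of $\eps\mapsto w^\eps\in C^{k-\ga}$.

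Part (ii) is the $t$-independent-$a$ case where the loss of derivatives disappears: when $\d_t a^\eps\equiv 0$ the factors $\d_t a^\eps/a^2$ in $c_j^l$ vanish, so $c_j^l=c_j^0$ and conditions \reff{small1}--\reff{small1111} collapse to the single pair \reff{small}/\reff{small+}, valid simultaneously for all $l$; moreover, differentiating $\om_j^\eps$ in $\eps$ no longer costs a $t$-derivative of the solution. Thus the fixed-point map is smooth from a fixed space $C_T^k$ into itself (for every $k$), uniformly in $\eps$, and the ordinary implicit function theorem in Banach spaces gives a $C^\infty$ map $\eps\mapsto w^\eps\in C_T^k$ for each $k$, after invoking Theorem~\ref{thm:reg}(ii) for the $C_T^k$-membership. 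The existence, uniqueness and $\eps_0$ come from the same invertibility-at-$\eps=0$ argument as in (i), now unencumbered by the scale bookkeeping.

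The main obstacle is the bookkeeping in part (i): making precise the statement that $\Phi$ is $C^\ga$ from the $\eps$-interval into the space with $\ga$ fewer derivatives, and that its $u$-derivative remains an isomorphism uniformly along the scale, so that the parametrized implicit function theorem can be applied at each level $j=0,1,\dots,k$ consistently. One must check that differentiating the composition $v\mapsto v(\xi,\om_j^\eps(\xi,x,t))$ in $\eps$ really loses exactly one derivative and no more, that the compact-plus-contraction splitting behind the $\eps=0$ invertibility is stable under the relevant perturbations, and that the a~priori bound $\|u^\eps\|$ stays controlled so that $\eps_0$ can be chosen uniformly. Once these estimates are in place, the conclusions in both parts follow by standard functional-analytic arguments.
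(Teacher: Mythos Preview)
Your outline for part (i) is in the right spirit and broadly parallels the paper: the paper also works in a scale of spaces and establishes that each $\eps$-derivative costs one unit of regularity. However, the paper does not invoke an abstract implicit function theorem in a scale; it proves two concrete lemmas and then runs a direct double induction (on $k$ and on $\ga$), constructing $\d_\eps^\ga u^\eps$ explicitly as the solution of an auxiliary system and verifying by a difference-quotient argument that it is the actual derivative. The first lemma shows that $I-B(\eps)-A(\eps)-D(\eps)$ is bijective on $\CC^i$ with a \emph{uniform} bound for all $\eps\le\eps_0$, and the second that $\eps\mapsto[I-B(\eps)-A(\eps)-D(\eps)]^{-1}\in\LL(\CC^{i+1},\CC^i)$ is locally Lipschitz. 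You underestimate the first of these: the map $\eps\mapsto B(\eps)\in\LL(\CC)$ is \emph{not} continuous (composition with the $\eps$-dependent characteristic $\om_j^\eps$ is not continuous in operator norm on $\CC$, only as $\LL(\CC^1,\CC)$), so ``invertibility at $\eps=0$ plus continuity of the data'' does not by itself give invertibility for small $\eps$. The paper instead argues by contradiction, using the smoothing property \reff{Fr20}: from a normalized sequence in the kernel for $\eps_n\to0$ it extracts a subsequence converging in $\CC$ and passes to the limit to land in $\KK_u=\{0\}$. Your final paragraph gestures at stability of the splitting but does not supply this compactness mechanism.

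Your treatment of part (ii) contains a genuine error. You claim that when $a^\eps$ is $t$-independent, ``differentiating $\om_j^\eps$ in $\eps$ no longer costs a $t$-derivative of the solution'' and hence the fixed-point map is $C^\infty$ from a fixed $C_T^k$ into itself. This is false: $t$-independence of $a^\eps$ is not $\eps$-independence. One has $\om_j^\eps(\xi,x,t)=t+(-1)^j\int_x^\xi a^\eps(\eta)^{-1}\,d\eta$, so $\d_\eps\om_j^\eps\ne0$ in general, and differentiating $u(\cdot,\om_j^\eps)$ in $\eps$ still requires $\d_t u$. (The no-loss situation you describe is the one in Remark~\ref{rem:reg1}, where $a$ is $\eps$-independent.) The paper's proof of (ii) is short and different: since the data are $C^\infty$, for every $m$ and every $\ga$ one may apply part (i) with $k=m+\ga$, obtaining $C^\ga$-smoothness of $\eps\mapsto w^\eps$ into $C_T^m$; as $\ga$ is arbitrary, the map is $C^\infty$ into $C_T^m$ for every $m$. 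The only role of $t$-independence is the part you got right: it forces $c_j^l\equiv c_j$, so \reff{small} or \reff{small+} already implies one of \reff{small1}--\reff{small1111} for every $k$, making part (i) applicable for all $k$ simultaneously.
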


\begin{rem}\label{rem:reg}\rm
Theorem \ref{thm:reg} claims that, under a number of conditions ruling out 
resonances, more regular data ensure more regular solutions.
This entails, in particular, that under the conditions of Theorem \ref{thm:dep}
the kernel of the operator of the problem (\ref{1})--(\ref{3}) in $C_T^{k+1}$
 equals  $\KK_w$.
This makes the assumption  $\dim  \KK_w=0$   of Theorem \ref{thm:dep} rather natural.
\end{rem}

\begin{rem}\label{rem:reg1}\rm
For the sake of brevity in Theorem \ref{thm:dep} we did not consider the special case when the coefficient $a$ is $\eps$-independent.
In this case there is no loss of smoothness, i.e. the dependence on $\eps$ of the partial derivatives of the solution $w^\eps$ is as smooth as 
the dependence on $\eps$ of $w^\eps$ itself. Furthermore,
 the smooth dependence of $w^\eps$ and its partial derivatives on $\eps$ can be easily obtained by
applying the classical Implicit Function Theorem. 
Specifically, if \reff{k-smooth_dep} is satisfied and if  $a$ is $\eps$-independent, then the map
$\eps\in[0,\eps_0)\mapsto w^\eps\in \CC^{k+1}$
is $C^{k+1}$-smooth.
\end{rem}

We hope that Theorems \ref{thm:Fredh},    \ref{thm:reg}, and  \ref{thm:dep} will make possible
developing   a theory of 
local smooth continuation and  bifurcation
of time-periodic solutions to general semilinear boundary value second-order hyperbolic  problems 
of a single space variable.
Another interesting direction could be a  non-smooth analysis of the 
discussed problems
in the framework of  algebras of generalized functions (see, e.g., \cite{ober}).

\subsection{Related work}\label{sec:related}

The paper \cite{He} addresses time-periodic solutions to the homogeneous 
wave equation $\partial_t^2w=\partial_x^2w$ for $x \in [0,1]$ with inhomogeneous boundary conditions
$\beta \partial_tw(0,t)-\gamma  \partial_xw(0,t)=f_0(t)$ and 
$\delta \partial_tw(1,t)+\gamma  \partial_xw(1,t)=f_1(t)$, where the 
right-hand sides $f_0$ and $f_1$ are $C^1$-smooth and periodic.
It is shown that the solution  $C^1$-smoothly depends  on the coefficients $\beta, \gamma$, and $\delta$ 
with respect to the $L^2$-norm (see also \cite{Grimmer} for a similar result).
Remark that the smooth dependence result for a linear problem in general  does not imply
such a result for the  corresponding
semilinear problem because the superposition operator generated by a $C^1$-smooth function  is  
$C^1$-smooth from $L^2$ into  $L^2$ if and only if it is affine.

In the papers \cite{Kim,Kim1,Kim2,kmit_tel,Rudakov,Wang} the  Fredholm or
 isomorphism properties of the linear
 telegraph equation with 
constant coefficients are  used  to get solvability results for the 
corresponding semilinear problems.

In \cite{KR2,KR3} we investigated  time-periodic problems for the general 
(with coefficients depending on space and time)
first-order hyperbolic systems with reflection boundary conditions. 
We suggested a functional-analytic approach that allowed us
to prove the Fredholm alternative 
in the scale of Sobolev-type spaces of periodic functions (in the autonomous case  \cite{KR2})
as well as in the space of continuous functions (in the non-autonomous case  \cite{KR3}). 
In the former  case \cite{KR2} we  applied the  Fourier series expansion, as in \cite{Vejvoda}.
In the latter case, like to the present paper,
we  used a weak formulation based on integration along characteristic curves.
The  Fredholm solvability result was essentially used in the autonomous case
to prove a smooth dependence on parameters and on the data. The general non-autonomous situation is much 
more complicated (and we address it here). The reason is that higher solution regularity, that 
is strongly related to the  smooth dependence \cite{KR2}, can be
achieved only if additional non-resonance conditions are fulfilled
\cite[Section 2.3]{kmit}.
The main difference  between  the problem  \reff{1s}--\reff{3s} and the problem that was investigated 
in  \cite{KR2,KR3} is this: now a number of  integral terms contribute into the system \reff{1s} as well as 
into the boundary conditions 
\reff{3s}. To handle these terms,  we will use the smoothing property proved in \cite{kmit}.

In \cite{hopf} we applied our results from  \cite{KR3} to prove a Hopf bifurcation theorem for semilinear
hyperbolic systems.

\section{Equivalence of the problems (\ref{1})--(\ref{3}) and 
(\ref{1s})--(\ref{3s})}\label{sec:equival}

Here we prove that  the problems (\ref{1})--(\ref{3}) and 
(\ref{1s})--(\ref{3s}) are equivalent in the sense of the classical solvability
as well as in the sense of the weak solvability.

First show that if $w$ is a classical
solution to (\ref{1})--(\ref{3}), then $u=(u_1,u_2)$ given by \reff{u_sol}
is a classical solution to  (\ref{1s})--(\ref{3s}). With this aim 
we outline the derivation of \reff{w_sol}.
We will use the
equalities
\beq
\partial_tw = \frac{u_1+u_2}{2},\quad\partial_xw=\frac{u_1-u_2}{2a} \nonumber
\ee
resulting from \reff{u_sol}.  Integrating the second one in $x$ and then the first one in $t$, one gets
\beq\label{between}
w(x,t)=[Iu](t)+[Ju](x,t)+w(0,0).
\ee
In order to show that
\beq\label{w00}
w(0,0)=Nu, 
\ee
 we first integrate the first equality of \reff{u_sol} in $t$ over $[0,T]$, put there $x=0$, 
and use the time-periodicity and the first equation from the boundary conditions \reff{2}. Consequently, we have
\beq
\int_0^T\Bigl(u_1(0,t)-a(0,t)r_0(t)w(0,t)\Bigr)\,dt=0. \nonumber
\ee
Then, calculating  $w(0,t)$ by means of  \reff{between}, the last
equality can be expressed in the form
\begin{eqnarray*}
w(0,0)\int_0^Ta(0,t)r_0(t)\,dt
=\int_0^T\frac{u_1(0,t)-u_2(0,t)}{2}\,dt-\int_0^Ta(0,t)r_0(t)
[Iu](t)\,\,dt,
\end{eqnarray*}
which in the notation of \reff{const_C} and \reff{N} gives \reff{w00} as desired.

Now, on the account of \reff{u_sol} and \reff{w_sol}, we easily come from  (\ref{1})--(\ref{3}) to 
(\ref{1s})--(\ref{3s}) where the latter is satisfied in the classical sense.

Further our aim is to prove that $w$ given by  \reff{w_sol} belongs to $C_T^2$ whenever $u\in\CC^1$ is a classical
solution to (\ref{1s})--(\ref{3s}). 
It suffices to show that $Ju\in C_T^2$ for every such
$u$.  By definition \reff{J}, we are done if we show that 
\beq\label{eqv1}
\int_0^x\frac{u_j(\xi,t)}{a(\xi,t)}\,d\xi\in C_T^2
\ee
 for $j=1,2$. Let us do this for $j=1$
(for $j=2$ we apply a similar argument).
Fix an arbitrary  $u\in\CC^1$ satisfying (\ref{1s})--(\ref{3s}).
 Plugging the  representation \reff{rep1} for $u_1$
into the integral  \reff{eqv1}, we see that we have to treat integral operators
of two kinds, namely
\beq\label{eqv2}
[S_1u_1](x,t)=\int_0^x\frac{c_1(0,\xi,t)u_2(0,\om_1(0,\xi,t))}{a(\xi,t)}\,d\xi
\ee
and 
\beq\label{eqv3}
[S_2u_1](x,t)=\int_0^x\frac{1}{a(\xi,t)}\int_0^\xi d_1(0,\eta,t)b_{12}(\eta,\om_1(\eta,\xi,t))
u_2(\eta,\om_1(\eta,\xi,t))\, d\eta d\xi
\ee
showing that they are smoothing and map $C_T^1$ into $C_T^2$. 

Denote by $\tau\in\R \mapsto \tilde\om_i(\tau,x,t)\in[0,1]$  the inverse of
the  equation of the $i$-th characteristic curve of \reff{1s}
passing through the point $(x,t)\in[0,1]\times\R$. 
Moreover,  in the calculations below we will use the formulas:
\begin{eqnarray}
\label{tilde_dx}
\d_t\tilde\om_j(\tau,x,t) & = &(-1)^{j+1}a(x,t) \exp \int_t^\tau(-1)^j
\d_1a(\tilde\om_j(\eta,x,t),\eta) d \eta,
\\
\label{tilde_dt}
\d_x\tilde\om_j(\tau,x,t) & = & \exp \int_t^\tau(-1)^j
\d_1a(\tilde\om_j(\eta,x,t),\eta) d \eta.
\end{eqnarray}

By simple change of variables in \reff{eqv2},
we get the following representation for
$[S_1u_1](x,t)$:
$$
[S_1u_1](x,t)=\int_t^{\om_1(0,x,t)}
\frac{c_1(0,\tilde\om_1(t,0,\tau),t)\d_\tau\tilde\om_1(t,0,\tau)u_2(0,\tau)}{a(\tilde\om_1(t,0,\tau),t)}\,d\tau.
$$
Taking into account \reff{tilde_dx} and smoothness assumptions on the initial data, 
we conclude that the right-hand side is a $C^2$-function as desired.

It remains to treat \reff{eqv3}. To this end, let
$$
d_{12}(\eta,\xi,t)=a(\xi,t)^{-1} d_1(0,\eta,t)b_{12}(\eta,\om_1(\eta,\xi,t)).
$$
By Fubini's theorem, 
\beq\label{fubini}
[S_2u_1](x,t)=\int_0^x\int^x_\eta d_{12}(\eta,\xi,t)
u_2(\eta,\om_1(\eta,\xi,t))\, d\xi d\eta.
\ee
Hence,
\begin{eqnarray}
\lefteqn{
\d_t[S_2u_1](x,t)=\int_0^x\int^x_\xi \d_t d_{12}(\eta,\xi,t)
u_2(\eta,\om_1(\eta,\xi,t))\, d\eta d\xi}\nonumber\\ &&+\int_0^x\int^x_\eta  d_{12}(\eta,\xi,t)
\d_tu_2(\eta,\om_1(\eta,\xi,t))\, d\xi d\eta.\label{eqv4}
\end{eqnarray}
The first summand meets the $C^1$-regularity. Let us show that this is the case  for the second summand.
On the account of the simple transformation
\begin{eqnarray*}
\d_\xi u_2(\eta,\om_1(\eta,\xi,t))=\d_2 u_2(\eta,\om_1(\eta,\xi,t))\d_\xi\om_1(\eta,\xi,t),
\end{eqnarray*}
where $\d_kg$ here and below  denotes the derivative of $g$ with respect to the $k$-th argument, we have
\begin{eqnarray}
\lefteqn{
\d_t u_2(\eta,\om_1(\eta,\xi,t))=\d_2 u_2(\eta,\om_1(\eta,\xi,t))\d_t\om_1(\eta,\xi,t)}\nonumber\\
&&=\frac{\d_t\om_1(\eta,\xi,t)}{\d_\xi\om_1(\eta,\xi,t)}\d_\xi u_2(\eta,\om_1(\eta,\xi,t)).\label{trans}
\end{eqnarray}
Here
\begin{eqnarray}
\label{dx}
\d_x\om_j(\xi,x,t) & = & \frac{(-1)^{j+1}}{a(x,t)} \exp \int_\xi^x(-1)^j
\left(\frac{\d_ta}{a^2}\right)(\eta,\om_j(\eta,x,t)) d \eta,
\\
\label{dt}
\d_t\om_j(\xi;x,t) & = & \exp 
\int_\xi^x (-1)^j\left(\frac{\d_ta}{a^2}\right)(\eta,\om_j(\eta,x,t)) d \eta.
\end{eqnarray}
Then in the notation 
$$
\tilde d_{12}(\eta,\xi,t)=d_{12}(\eta,\xi,t)\frac{\d_t\om_1(\eta,\xi,t)}{\d_\xi\om_1(\eta,\xi,t)}
$$
the second summand in \reff{trans} equals
\begin{eqnarray}
\lefteqn{
\int_0^x\int^x_\eta  \tilde d_{12}(\eta,\xi,t)
\d_\xi u_2(\eta,\om_1(\eta,\xi,t))\, d\xi d\eta}\nonumber\\
&&=\int_0^x \left[\tilde d_{12}(\eta,\xi,t)
 u_2(\eta,\om_1(\eta,\xi,t))\right]_{\xi=\eta}^x d\eta\nonumber\\ &&
-\int_0^x\int^x_\eta  \d_\xi\tilde d_{12}(\eta,\xi,t)
u_2(\eta,\om_1(\eta,\xi,t))\, d\xi d\eta.\label{eqv5}
\end{eqnarray}
We are prepared to conclude that the function $\d_t[S_2u_1](x,t)$ is continuously differentiable.
 Hence,  $[S_2u_1](x,t)$ has $C^2$-regularity in $t$. To prove that it has $C^2$-regularity also in $x$,
we follow a similar argument, but this time we differentiate \reff{fubini}
in $x$. 

The fact that $w$ given by \reff{w_sol} satisfies (\ref{1})--(\ref{3}) easily follows from 
(\ref{1s})--(\ref{3s}).

The same argument works also to show the equivalence of the problems  (\ref{1})--(\ref{3}) and 
(\ref{1s})--(\ref{3s}) in the sense of the weak solvability, the only difference being in 
applying the calculations performed by \reff{fubini}, \reff{eqv4}, \reff{trans}, and 
\reff{eqv5} with $u_2$ replaced by an arbitrary fixed sequence $u_2^l$
tending to $u_2$ in $C_T$ as $l\to\infty$. Passing to the limit  as $l\to\infty$ 
in thus obtained analog of \reff{eqv5} 
 finishes the proof.

\section{Fredholm alternative: proof of Theorem \ref{thm:Fredh}}\label{sec:Fredh}

On the account of Section \ref{sec:equival}, we are done if we prove 
the Fredholm alternative for (\ref{1s})--(\ref{3s}): First,
$\dim \KK_u <\infty$, where $\KK_u$ is the vector space of all  weak solutions to
\reff{1s}--\reff{3s} with $f=0$. Second,
the  space  of all  $f \in  C_T$ such that there exists a weak solution 
to~(\ref{1s})--(\ref{3s}) is a closed subspace of codimension $\dim \KK_u$ in $\CC$.
Third,  either  $\dim \KK_u>0$ or for any $f \in C_T$ 
there exists exactly one weak solution $u$
to~(\ref{1s})--(\ref{3s}).

To simplify further notation, in parallel with the notation $\om_j(\xi,x,t)$
we will use its shortened form $\om_j(\xi)$.
The system  \reff{rep1}--\reff{rep2} can be written as the operator equation
\beq\label{abstr}
u=Bu+Au+Du+Rf,
\ee
where the linear bounded operators $B,A,D: \CC\to \CC$ and  $R: C \to \CC$ are 
defined by
\begin{eqnarray}
[Bu](x,t)&=&\Bigl(c_1(0,x,t)u_2(0,\om_1(0)),c_2(1,x,t) u_1(1,\om_2(1))\Bigr)\nonumber\\
\label{Adef}
[Au](x,t)&=&\bigg(
2c_1(0,x,t)a(0,\om_1(0))r_0(\om_1(0))[Gu](\om_1(0)),\nonumber\\
&&-2c_2(1,x,t)a(1,\om_2(1))r_1(\om_2(1))[Fu](1,\om_2(1))
\bigg),\nonumber\\
\label{Ddef}
[Du](x,t)&=&\Bigg(
-\int_0^x d_1(\xi,x,t) \left(b_{12}u_2-[a_3Fu]\right)(\xi,\om_1(\xi))\,d\xi, \nonumber\\
&&
-\int_1^x d_2(\xi,x,t) \left(b_{21}u_1-[a_3Fu]\right)(\xi,\om_2(\xi))\,d\xi
\Bigg),\nonumber\\
\label{Gdef}
[Rf](x,t)&=&\left(\int_0^x d_1(\xi,x,t) f(\xi,\om_1(\xi))\,d\xi, 
\int_1^x d_2(\xi,x,t) f(\xi,\om_1(\xi))\,d\xi\right).\nonumber
\end{eqnarray}

We have to show that the operator $I-B-A-D$ is Fredholm of index zero from $\CC$ to~$\CC$.
First we prove the bijectivity of $I-B$:
\begin{lem}
\label{lem:iso}
If one of the conditions \reff{small} and \reff{small+} is fulfilled, then $I-B$ is bijective
from $\CC$ to $\CC$.
\end{lem}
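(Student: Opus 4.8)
The plan is to reduce the equation $(I-B)u=v$, for an arbitrary right-hand side $v=(v_1,v_2)\in\CC$, to a single scalar functional equation with a shift for the boundary trace $u_1(1,\cdot)$, and to solve that scalar equation by a geometric series whose convergence is exactly what the non-resonance condition provides. Written out, $(I-B)u=v$ is
$$u_1(x,t)=v_1(x,t)+c_1(0,x,t)\,u_2(0,\om_1(0,x,t)),\qquad u_2(x,t)=v_2(x,t)+c_2(1,x,t)\,u_1(1,\om_2(1,x,t)).$$
Putting $x=1$ in the first identity and $x=0$ in the second and substituting the latter into the former, I obtain for $\phi:=u_1(1,\cdot)$ the equation
$$\phi(t)=g(t)+\kappa(t)\,\phi(\sigma(t)),$$
where $g(t)=v_1(1,t)+c_1(0,1,t)\,v_2(0,\om_1(0,1,t))\in C_T(\R)$, the shift $\sigma(t)=\om_2(1,0,\om_1(0,1,t))$ is an increasing homeomorphism of $\R$ with $\sigma(t+T)=\sigma(t)+T$ (indeed $\sigma(t)>t$, by the sign conventions in \reff{char}), and the multiplier is
$$\kappa(t)=c_1(0,1,t)\,c_2(1,0,\om_1(0,1,t))=\exp\int_0^1\left[\left(\frac{b_{11}}{a}\right)(\eta,\om_1(\eta,1,t))+\left(\frac{b_{22}}{a}\right)(\eta,\om_2(\eta,0,\om_1(0,1,t)))\right]d\eta,$$
so that hypothesis \reff{small} is precisely the statement $\kappa(t)\ne1$ for all $t$. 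Conversely, from any $\phi\in C_T(\R)$ solving this scalar equation one recovers $u\in\CC$ by setting $\psi(t):=v_2(0,t)+c_2(1,0,t)\,\phi(\om_2(1,0,t))$ and then $u_1(x,t):=v_1(x,t)+c_1(0,x,t)\,\psi(\om_1(0,x,t))$, $u_2(x,t):=v_2(x,t)+c_2(1,x,t)\,\phi(\om_2(1,x,t))$; a short computation gives $u_1(1,\cdot)=\phi$, $u_2(0,\cdot)=\psi$, hence $(I-B)u=v$, and shows that $v\mapsto u$ is two-sided inverse to $I-B$ (injectivity of $I-B$ being immediate from uniqueness of the scalar solution). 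Thus the lemma reduces to the unique solvability of the scalar equation in $C_T(\R)$.

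The point worth emphasizing is why the merely pointwise condition \reff{small} suffices. Since the coefficients are $T$-periodic and, under the regularity assumed in Theorem~\ref{thm:Fredh} (namely $a\in C_T^2$, $a_1,a_2\in C_T^1$), the characteristics $\om_j$ are $C^1$ with $\d_t\om_j>0$ and $b_{jj}/a$ is continuous, the function $\kappa$ is continuous, $T$-periodic and --- being a product of exponentials --- \emph{strictly positive}; hence $\kappa([0,T])$ is a compact connected subset of $(0,\infty)$, and \reff{small} forces it to lie entirely in $(0,1)$ or entirely in $(1,\infty)$. In the first case, with $q:=\max_t\kappa(t)<1$, the series
$$\phi(t)=\sum_{n=0}^{\infty}\left(\prod_{k=0}^{n-1}\kappa(\sigma^k(t))\right)g(\sigma^n(t))$$
has $n$-th term bounded by $q^n\|g\|_\infty$, so it converges absolutely and uniformly to a continuous $T$-periodic function (using that $\sigma$ commutes with translation by $T$), which one checks solves the equation; uniqueness follows from $\|\phi\|_\infty\le q\|\phi\|_\infty$. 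In the second case, with $Q:=\min_t\kappa(t)>1$, I rewrite the equation as $\phi(s)=-\kappa(\sigma^{-1}(s))^{-1}g(\sigma^{-1}(s))+\kappa(\sigma^{-1}(s))^{-1}\phi(\sigma^{-1}(s))$, whose multiplier is bounded by $Q^{-1}<1$, and solve it by the analogous series in the iterates of $\sigma^{-1}$. This is exactly where the absence of a small-divisor effect is visible: no arithmetic condition on the rotation number of $\sigma$ enters, only a one-sided bound on $\kappa$, which positivity plus connectedness hands us for free.

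If instead \reff{small+} holds, the same argument applies after first eliminating $u_1(1,\cdot)$ in favour of $\psi=u_2(0,\cdot)$: one gets a scalar equation $\psi=\tilde g+\tilde\kappa\,(\psi\circ\tilde\sigma)$ for the companion round trip, with multiplier $\tilde\kappa(t)=c_2(1,0,t)\,c_1(0,1,\om_2(1,0,t))$ and shift $\tilde\sigma(t)=\om_1(0,1,\om_2(1,0,t))$, and $\tilde\kappa$ is again continuous, $T$-periodic and positive, kept away from the value $1$ precisely by \reff{small+}; the dichotomy and the geometric series then go through verbatim. In either case $I-B$ is bijective. The only genuine effort in carrying this out is bookkeeping --- keeping track of which compositions of the characteristic maps $\om_j$ and which factors $c_j$ occur, and verifying that all of them are continuous and $T$-periodic so that $B$ indeed maps $\CC$ into $\CC$ and the above series live in $C_T(\R)$ --- rather than any analytic obstacle.
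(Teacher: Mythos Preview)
Your proof is correct and follows essentially the same route as the paper: both reduce $(I-B)u=v$ to a scalar shift equation for the trace $u_1(1,\cdot)$, invoke the positivity of $\kappa(t)=c_1(0,1,t)c_2(1,0,\om_1(0,1,t))$ together with continuity and periodicity to turn the pointwise condition $\kappa\ne1$ into a global dichotomy $\kappa<1$ or $\kappa>1$, and then solve by a contraction (you write out the Neumann series explicitly, the paper appeals to the Banach fixed-point theorem and, in the case $\kappa>1$, passes to the inverse shift via the substitution $t=\om_1(1,0,\om_2(0,1,\tau))$). Your presentation is in fact slightly more explicit about why the dichotomy holds, which the paper leaves to the reader.
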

\begin{proof}
Suppose \reff{small}. Let $g=(g_1,g_2) \in \CC$ be arbitrary given. 
We have $u=Bu+g$ or, the same,
\beq\label{iso0}
u_1(x,t)=c_1(0,x,t)u_2(0,\om_1(0))+g_1(x,t),\,\,\, u_2(x,t)=c_2(1,x,t) u_1(1,\om_2(1))+g_2(x,t)
\ee
 if and only if
\beq\label{iso1}
\begin{array}{ll}
u_1(x,t)=c_1(0,x,t)\left[c_2(1,0,\om_1(0)) u_1(1,\om_2(1,0,\om_1(0)))+g_2(0,\om_1(0))\right]+
g_1(x,t),\\
u_2(x,t)=c_2(1,x,t) u_1(1,\om_2(1))+g_2(x,t).
\end{array}
\ee
Observe that it suffices to show the existence of a unique continuous solution 
$t\in[0,T]\to u_1(1,t)\in\R$. Putting $x=1$ in  the first equation of \reff{iso1}, we get
\beq\label{iso2}
u_1(1,t)=c_1(0,1,t)c_2(1,0,\om_1(0,1,t)) u_1(1,\om_2(1,0,\om_1(0,1,t)))+\tilde g(1,t),
\ee 
where
$\tilde g(x,t)=c_1(0,x,t) g_2(0,\om_1(0))+
g_1(x,t)$. Putting then $t=\om_1\left(1,0,\om_2(0,1,\tau)\right)$, we come to another
writing of \reff{iso2}, namely
\beq\label{iso3}
\begin{array}{cc}
\left[c_1\left(0,1,\om_1\left(1,0,\om_2(0,1,\tau)\right)\right)
c_2(1,0,\om_2(0,1,\tau))\right]u_1(1,\tau)\\= 
\left(u_1-\tilde g\right)(1,\om_1(1,0,\om_2(0,1,\tau))).
\end{array}
\ee 
Here we used the identity $\om_2(1,0,\om_1(0,1,\om_1\left(1,0,\om_2(0,1,\tau)\right)))\equiv\tau$,
being true for all $\tau\in\R$.
Due to the Banach fixed point argument, Equations \reff{iso2} and
 \reff{iso3} are uniquely solvable under the contraction conditions, respectively,
$$
c_1(0,1,t)c_2(1,0,\om_1(0,1,t))<1 \mbox{ for all } t\in[0,T]
$$ 
and
$$
\left[c_1\left(0,1,\om_1\left(1,0,\om_2(0,1,\tau)\right)\right)
c_2(1,0,\om_2(0,1,\tau))\right]^{-1}<1 \mbox{ for all } \tau\in[0,T].
$$
Since the latter is equivalent to 
$\left[c_1(0,1,t)c_2(1,0,\om_1(0,1,t))\right]^{-1}<1$ for all $t\in[0,T]$, we immediately meet 
assumption \reff{small}. The proof under the assumption \reff{small} is thereby complete.

The proof  under the assumption  \reff{small+} follows along the same line as above, 
the only difference being in using instead of
\reff{iso1} another equivalent form of \reff{iso0}, namely
\beq
\begin{array}{ll}
u_1(x,t)=c_1(0,x,t)u_2(0,\om_1(0))+g_1(x,t),\nonumber\\
u_2(x,t)=c_2(1,x,t)\left[c_1(0,1,\om_2(1))u_2(0,\om_1(0,1,\om_2(1)))+g_1(1,\om_2(1))\right]+g_2(x,t),
\end{array}
\ee
and putting $x=0$ in the latter.
\end{proof}

Returning  to the operator $I-B-A-D$, we would like to emphasize that 
the operators $A$ and $D$ are  not compact from  $\CC$ to $\CC$, in general,
 because they are  partial integral operators
(other kinds of partial integral operators are investigated in \cite{Appell}).
By Lemma~\ref{lem:iso}, 
 the operator $I-B-A-D$ is Fredholm of index zero from $\CC$ to $\CC$
if and only if
$
I-(I-B)^{-1}(A+D) \mbox{ is Fredholm of index zero from } \CC \mbox{ to } \CC.
$
Then, on the account of Fredholmness criterion  \cite[Theorem XIII.5.2]{KA}, we are done if we prove 
the following statement:
\begin{lem}
\label{lem:comp}
The operator $\left[(I-B)^{-1}(A+D)\right]^2$ is compact from $\CC$ to $\CC$.
\end{lem}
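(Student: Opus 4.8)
The plan is to show that composing the operator $(I-B)^{-1}(A+D)$ with itself gains enough regularity to land in a compactly embedded subspace of $\CC$. The first observation is that $(I-B)^{-1}$ is a bounded operator on $\CC$ by Lemma~\ref{lem:iso}, and moreover, inspecting the explicit formulas \reff{iso1}--\reff{iso3}, it is built only from substitution operators along characteristics and multiplication by the $C^1$-coefficients $c_j$; in particular $(I-B)^{-1}$ maps $C_T^1$ into $C_T^1$ (the characteristics and coefficients are $C^1$ under \reff{smooth}). So it suffices to prove that $\left[(A+D)(I-B)^{-1}(A+D)\right]$ — equivalently $(A+D)$ composed with a bounded operator on $\CC$ — maps $\CC$ into (a bounded subset of) $C_T^1\times C_T^1=\CC^1$, since $\CC^1\hookrightarrow\CC$ compactly by Arzel\`a--Ascoli. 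Thus the crux is: \emph{$A+D$ smooths $C_T$-data to $C_T^1$.}

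The structure of $A$ and $D$ is what makes this work. The operator $A$ involves the integral operators $G$, $F$ which, by \reff{I}--\reff{F}, contain $Iu$ and $Ju$: these are antiderivatives in $t$ (resp.\ in $x$) of components of $u$, hence automatically one degree smoother in that one variable. The operator $D$ is a genuine partial integral operator: each component integrates $u$ against a kernel $d_j(\xi,x,t)b_{j\,3-j}(\xi,\om_j(\xi))$ over $\xi\in[0,x]$ (resp.\ $[1,x]$), plus the $a_3Fu$ term which again carries the smoothing $F$. So on a first pass $A+D$ produces functions that are $C_T$ in general but whose partial derivative in \emph{one} distinguished direction already exists continuously — the direction of integration. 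The key point, exactly as in the equivalence argument of Section~\ref{sec:equival} (the manipulations around \reff{fubini}--\reff{eqv5}), is that one can \emph{trade} a derivative in the bad direction for a derivative in the good direction, at the cost of an integration by parts along characteristics using the transport identities \reff{trans}, \reff{dx}, \reff{dt}. After one application this is not enough — one mixed direction is controlled, the other is not. But after the second application the smoothing in the complementary direction (coming from the $0$-to-$x$ versus $1$-to-$x$ integration in the two components, combined with the $Iu$/$Ju$ antiderivatives acting in $t$) fills the gap, so that $\left[(I-B)^{-1}(A+D)\right]^2$ maps $\CC$ boundedly into $\CC^1$.

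Concretely I would proceed in three steps. Step 1: Record that $(I-B)^{-1}$ is bounded $\CC\to\CC$ and also bounded $\CC^1\to\CC^1$, reading this off the explicit solution formula in the proof of Lemma~\ref{lem:iso} together with \reff{smooth} and the smoothness \reff{tilde_dx}--\reff{tilde_dt}, \reff{dx}--\reff{dt} of the characteristics. Step 2: Show that $A+D$, hence $(A+D)(I-B)^{-1}$, maps $\CC$ into the intermediate space of continuous functions with one continuous mixed/characteristic derivative — this is where the $I$, $G$, $F$ antiderivative structure and the partial-integral structure of $D$ are used, and it is essentially the smoothing property quoted from \cite{kmit}. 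Step 3: Compose once more: applying $(A+D)$ to a function already possessing that one-directional regularity, and using the integration-by-parts/characteristic-change trick of Section~\ref{sec:equival} to convert the remaining derivative, gives $\left[(I-B)^{-1}(A+D)\right]^2(\CC)\subset\CC^1$ boundedly; then $\CC^1\hookrightarrow\CC$ compactly finishes it.

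The main obstacle is Step 3, and within it the honest bookkeeping of \emph{which} partial derivatives of $(A+D)u$ are continuous after one application and verifying that the second application genuinely closes the loop in \emph{both} the $x$- and $t$-directions. The danger is a direction that stays ``bad'' forever; the resolution is that the two components of the system are smoothed along \emph{different} families of characteristics (the $1$-characteristics for $u_1$, the $2$-characteristics for $u_2$), and the boundary-coupling operators $A$, together with the $t$-antiderivative $I$, mix the two components, so the composition distributes regularity to all directions. Making the integration-by-parts in the presence of the $x$-dependent upper limit of integration (the $[\,\cdot\,]_{\xi=\eta}^{x}$ boundary terms, as in \reff{eqv5}) produce only $C_T^1$-controlled quantities — using $a\in C_T^2$ and $a_1,a_2\in C_T^1$ so that $b_{ij}\in C_T^1$ — is the technical heart of the verification.
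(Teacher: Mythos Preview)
Your Step~1 contains a genuine gap: under the hypotheses of Lemma~\ref{lem:comp} (namely \reff{smooth} and one of \reff{small}, \reff{small+}) it is \emph{not} true in general that $(I-B)^{-1}$ is bounded from $\CC^1$ to $\CC^1$. Differentiating the fixed-point equation \reff{iso2} in $t$ produces a new contraction coefficient $c_1(0,1,t)c_2(1,0,\om_1(0,1,t))\,\d_t\om_2(1,0,\om_1(0,1,t))\,\d_t\om_1(0,1,t)$, which in the paper's notation equals $c_1^1(0,1,t)c_2^1(1,0,\om_1(0,1,t))$. Condition \reff{small} forces only $c_1^0c_2^0\neq 1$; if $\partial_t a\not\equiv 0$ the factor $c_1^1c_2^1$ can straddle $1$, and then $(I-B)^{-1}$ fails to act boundedly on $\CC^1$. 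This is precisely why Theorem~\ref{thm:reg} requires the \emph{additional} non-resonance conditions \reff{small1}--\reff{small1111} to push regularity up --- conditions that are not available here. The same objection hits the middle $(I-B)^{-1}$ in your composition: you never verify that it preserves your (never precisely defined) ``intermediate'' one-directional regularity space, and there is no reason it should.

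The paper circumvents the problem by an algebraic trick you did not use: writing $(I-B)^{-1}=I+B(I-B)^{-1}$ gives
\[
(A+D)(I-B)^{-1}(A+D)=(A+D)^2+(A+D)B\,(I-B)^{-1}(A+D),
\]
so one needs only $(I-B)^{-1}\in\LL(\CC)$ (Lemma~\ref{lem:iso}) together with the smoothing statements $(A+D)^2:\CC\to\CC^1$ and $(A+D)B:\CC\to\CC^1$. These are then checked term by term for $A^2$, $AD$, $DA$, $D^2$, $AB$, $DB$ via exactly the change-of-variables and integration-by-parts manipulations you describe from Section~\ref{sec:equival}. The point is that $B$ evaluates $u$ on the boundary lines $x=0$ or $x=1$, so $(A+D)B$ gains the full $\CC^1$ regularity in one shot without any intermediate-space bookkeeping, and no mapping property of $(I-B)^{-1}$ beyond boundedness on $\CC$ is ever invoked.
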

\begin{proof}
Due to the boundedness of the operator $(I-B)^{-1}$,  it is sufficient to prove that
\beq
\label{comp} 
(A+D)(I-B)^{-1}(A+D) \mbox{ is compact from } \CC \mbox{ to } \CC.
\ee
 Since
$$
(A+D)(I-B)^{-1}(A+D)=(A+D)^2+(A+D)B(I-B)^{-1}(A+D),
$$  
the statement \reff{comp} will be proved if we show that
\beq
\label{Fr11}
(A+D)^2 \mbox{ and } (A+D)B  \mbox{ are compact from } \CC \mbox{ to } \CC.
\ee
By the Arzela-Ascoli theorem, $\CC^1$ 
is compactly embedded into $\CC$. Hence, 
for \reff{Fr11} it  suffices to show that
\beq
\label{Fr2}
(A+D)^2 \mbox{ and } (A+D)B  \mbox{ map  continuously } \CC \mbox{ into } \CC^1.
\ee
To reach \reff{Fr2}, we will prove the following smoothing property:
\beq
\label{Fr20}
A^2, D^2, AD, DA, AB, \mbox{ and } DB  \mbox{ map  continuously } \CC \mbox{ into } \CC^1.
\ee

Let us  start with the operator $A^2$. Using the definition \reff{Adef}, we are done if
we show that
\beq\label{Fr20A}
GA \mbox{ and } FA  \mbox{ map continuously } \CC \mbox{ into } \CC^1.
\ee
On the account of the  definition  \reff{G} of $G$ and the continuous embedding of $C_T^1(\R)$
into $\CC^1$, the operator $G$ and, hence,  the operator $GA$  maps continuously $\CC$
into  $\CC^1$. 
Moreover, by the definition  \reff{F} of $F$, to get \reff{Fr20A} for  $FA$
we only need to handle the operator
$$
[JAu](1,t)= \int_0^1\frac{[Au]_1(x,t)-[Au]_2(x,t)}{2a(x,t)}\,dx.
$$
Again, by the definition of $A$, we are left with the integral
$$
\int_0^1\frac{[Au]_2(x,t)}{2a(x,t)}\,dx
$$
or, even more, with the integral
\begin{eqnarray*}
\lefteqn{
\int_0^1\frac{c_2(1,x,t)a(1,\om_2(1))r_1(\om_2(1))}{2a(x,t)}[Ju](1,\om_2(1))\,dx}\\
&&=
\int_0^1\frac{c_2(1,x,t)a(1,\om_2(1))r_1(\om_2(1))}{2a(x,t)}\int_0^1
\left(\frac{u_1-u_2}{2a}\right)(\xi,\om_2(1))\, d\xi dx\\
&&=
\int_0^1\int_{\om_2(1,0,t)}^t
\frac{c_2(1,\tilde\om_2(t,1,\tau),t)a(1,\tau)r_1(\tau)\d_\tau\tilde\om_2(t,1,\tau)}
{2a(\tilde\om_2(t,1,\tau),t)}
\left(\frac{u_1-u_2}{2a}\right)(\xi,\tau)\, d\tau d\xi,
\end{eqnarray*}
where $\d_\tau\tilde\om_2(t,1,\tau)$ is given by \reff{tilde_dt}. The right-hand side of the latter 
equality has the desired smoothing property, what finishes the proof of \reff{Fr20A}.

Next we prove  \reff{Fr20} for $D^2$. 
Taking into account the density of  $\CC^1$ in $\CC$,
we are done if we show that there is a constant $C>0$ such that
\beq\label{D20}
\|\d_xD^2u\|_\infty+\|\d_tD^2u\|_\infty\le C\|u\|_\infty
\ee
for all $u\in\CC^1$.
Using the definitions of $D$ and $F$ and the smoothing property of $G$ mentioned above,
we only need to treat  integral operators of two types 
contributing into $D^2$. Thus, the integral operator of the first type 
\begin{eqnarray}
\lefteqn{
\int_0^xd_1(\xi,x,t)a_3(\xi,\om_1(\xi))\int_0^\xi\left(\frac{u_1-u_2}{2a}\right)(\eta,\om_1(\xi))
\,d\eta d\xi}\label{DF}\\
&&= -\int_{0}^x\int_{\om_1(\eta)}^t d_1(\tilde\om_1(\tau),x,t)
a_3(\tilde\om_1(\tau),\tau)a(\tilde\om_1(\tau),\tau)
\left(\frac{u_1-u_2}{2a}\right)(\eta,\tau)
\,d\tau d\eta, \nonumber
\end{eqnarray}
where $\tilde\om_1(\tau)=\tilde\om_1(\tau,x,t)$,
maps  continuously $\CC$  into $\CC^1$, what immediately entails the estimate of kind \reff{D20}
for it.
It remains to prove the upper bound $ C\|u\|_\infty$  
for the integral   of the type 
\beq\label{D21}
\begin{array}{ll}
\displaystyle\int_0^x\int_1^\xi d_{12}(\xi,\eta,x,t)u_1(\eta,\om_2(\eta;\xi,\om_1(\xi)))\, d \eta d \xi\\
\displaystyle=\int^0_x\int_0^\eta d_{12}(\xi,\eta,x,t)u_1(\eta,\om_2(\eta;\xi,\om_1(\xi)))\,d \xi d \eta \\
\displaystyle
+\int_x^1\int_0^x d_{12}(\xi,\eta,x,t)u_1(\eta,\om_2(\eta;\xi,\om_1(\xi)))\,d \xi d \eta
\end{array}
\ee
with
\begin{eqnarray*}
d_{12}(\xi,\eta,x,t)
=d_1(\xi,x,t)d_2(\eta,\xi,\om_1(\xi))b_{12}(\xi,\om_1(\xi))
b_{21}(\eta,\om_2(\eta;\xi,\om_1(\xi))).
\label{d12}
\end{eqnarray*}
Note that
\begin{eqnarray*}
&\displaystyle
(\d_t-a(x,t)\d_x)\int_0^x\int_1^\xi d_{12}(\xi,\eta,x,t)u_1(\eta,\om_2(\eta;\xi,\om_1(\xi)))\, d \eta d \xi &\\
&\displaystyle
=-a(x,t)\int_1^xd_{12}(x,\eta,x,t)u_1(\eta,\om_2(\eta))\, d \eta,&
\end{eqnarray*}
where the derivatives are considered in a distributional sense.
Hence, to  derive \reff{D20} with $[D^2u](x,t)$ replaced by \reff{D21}, it is sufficient to prove the estimate
$\|\d_tD^2u\|_\infty\le C\|u\|_\infty$ satisfying uniformly in  $u\in\CC^1$. Thus,
we differentiate \reff{D21}
with respect to $t$  (without loss of generality 
we illustrate our argument only on the first summand in the right-hand side of \reff{D21}) and get
\begin{eqnarray}
\lefteqn{
\int_0^x\int_0^\eta \d_t d_{12}(\xi,\eta,x,t)u_1(\eta,\om_2(\eta;\xi,\om_1(\xi)))\, d\xi d\eta}
\nonumber\\
&&+\int_0^x\int_0^\eta  d_{12}(\xi,\eta,x,t)\d_t u_1(\eta,\om_2(\eta;\xi,\om_1(\xi)))\, d\xi d\eta.\label{D22}
\end{eqnarray}
The first summand obviously fits the desired estimate. To handle the second one,
we compute
\begin{eqnarray*}
\lefteqn{
\d_\xi u_1(\eta,\om_2(\eta;\xi,\om_1(\xi)))}\\ &&= \d_2 u_1(\eta,\om_2(\eta;\xi,\om_1(\xi)))
\left[\d_2\om_2(\eta;\xi,\om_1(\xi))+\d_3\om_2(\eta;\xi,\om_1(\xi))\d_\xi\om_1(\xi) \right].
\end{eqnarray*}
Hence, applying \reff{char},  \reff{dx}, and \reff{dt} gives
\beq
\begin{array}{cc}
\d_t u_1(\eta,\om_2(\eta;\xi,\om_1(\xi)))
=\displaystyle\d_2 u_1(\eta,\om_2(\eta;\xi,\om_1(\xi)))\d_3\om_2(\eta;\xi,\om_1(\xi))\d_t\om_1(\xi)
\nonumber\\ [2mm]
=\displaystyle
\frac{\d_\xi u_1(\eta,\om_2(\eta;\xi,\om_1(\xi)))\d_3\om_2(\eta;\xi,\om_1(\xi))\d_t\om_1(\xi)}
{\d_2\om_2(\eta;\xi,\om_1(\xi))+\d_3\om_2(\eta;\xi,\om_1(\xi))\d_\xi\om_1(\xi)}
\nonumber\\ [3mm]\displaystyle
=-\frac{1}{2}a(\xi,\om_1(\xi))\d_t\om_1(\xi)
\d_\xi u_1(\eta,\om_2(\eta;\xi,\om_1(\xi))).\nonumber
\end{array}
\ee
Then, using the notation
$$
\tilde d_{12}(\xi,\eta,x,t)=-\frac{1}{2}d_{12}(\xi,\eta,x,t)a(\xi,\om_1(\xi))\d_t\om_1(\xi),
$$
the second summand in \reff{D22} equals
\begin{eqnarray*}
\lefteqn{
\int_0^x\int_0^\eta  \tilde d_{12}(\xi,\eta,x,t)\d_\xi u_1(\eta,\om_2(\eta;\xi,\om_1(\xi)))\, 
d \xi d \eta}\\
&&=\int_0^x\left[\tilde d_{12}(\xi,\eta,x,t) u_1(\eta,\om_2(\eta;\xi,\om_1(\xi)))
\right]_{\xi=0}^{\xi=\eta}\,  d \eta\\
&&-\int_0^x\int_0^\eta \d_\xi \tilde d_{12}(\xi,\eta,x,t) u_1(\eta,\om_2(\eta;\xi,\om_1(\xi)))\, 
d \xi d \eta,
\end{eqnarray*}
what immediately entails the desired estimate. We therefore finished with the
estimate \reff{D20}.

Further we prove \reff{Fr20} for the operator $AD$. As above, due to the definition of $A$,
we are reduced to give the proof  for the operator $FD$ only.
On the account of the definition of $F$, the latter will be proved once we handle the operator
$JD$.
Thus,
\begin{eqnarray*}
[JDu](x,t)=&-& \int_0^x\frac{1}{2a(\xi,t)}\int_0^\xi d_1(\eta,\xi,t)
\Bigl(b_{12}u_2-[a_3Fu]\Bigr)(\eta,\om_1(\eta,\xi,t))\,d\eta d\xi \\
&+& \int_0^x\frac{1}{2a(\xi,t)}\int_1^\xi d_2(\eta,\xi,t)
\Bigl(b_{21}u_1-[a_3Fu]\Bigr))(\eta,\om_2(\eta,\xi,t))\,d\eta d\xi.
\end{eqnarray*}
After  changing the order of integration and making 
a simple change of variables the first summand in the right-hand side
(and similarly for the second summand)  can be written in the form 
\begin{eqnarray*}
- \int_0^x\int_t^{\om_1(\eta)}\frac{1}{2a(\tilde\om_1(t,\eta,\tau),t)} 
d_1(\eta,\tilde\om_1(t,\eta,\tau),t)
\Bigl(b_{12}u_2-[a_3Fu]\Bigr)(\eta,\tau)\,d\tau d\eta
\end{eqnarray*}
allowing to state the desired smoothing property.

On the next step we treat the operator $DA$. For instance, for  $[DAu]_1$
 (and similarly for $[DAu]_2$) we have
\begin{eqnarray*}
\lefteqn{
[DAu]_1(x,t)=
 \int_0^x d_1(\xi,x,t)
\Bigl(2b_{12}(\xi,\om_1(\xi))c_2(1,\xi,\om_1(\xi))a(1,\om_2(1,\xi,\om_1(\xi)))}\\
&&\times
r_1(\om_2(1,\xi,\om_1(\xi)))[Fu](1,\om_2(1,\xi,\om_1(\xi)))
-[a_3FAu]\Bigr)(\xi,\om_1(\xi))\,d\xi.
\end{eqnarray*}
Again, by the definition of $F$, we are done if we prove the smoothing property \reff{D20}
for the latter expression but with $J$ in place of $F$. Here
one can apply the same argument as in \reff{DF} (changing the order of integration and using the
 changing of variables $\tau=\om_1(\xi)$).

Turning back to \reff{Fr20}, we further proceed with the operator $AB$. By the definition
of  $A, B$, and  $F$, 
it suffices to show that the operator $JB$ maps  continuously $\CC$  into $C_T^1$. 
Indeed,
\beq
\begin{array}{cc}
\displaystyle[JBu](x,t)=
 \int_0^x 
\frac{c_1(0,\xi,t)u_2(0,\om_1(0,\xi,t))-c_2(1,\xi,t)u_1(1,\om_2(1,\xi,t))}{2a(\xi,t)}\,d\xi
\nonumber\\\displaystyle
= \int_t^{\om_1(0)}
\frac{c_1(0,\tilde\om_1(t,0,\tau),t)u_2(0,\tau)}
{2a(\tilde\om_1(t,0,\tau),t)}\d_\tau\tilde\om_1(t,0,\tau)\,d\tau\nonumber\\
\displaystyle
- \int_{\om_2(1,0,t)}^{\om_2(1)}
\frac{c_2(1,\tilde\om_2(t,1,\tau),t)u_1(1,\tau)}
{2a(\tilde\om_2(t,1,\tau),t)}\d_\tau\tilde\om_2(t,1,\tau)\,d\tau.\nonumber
\end{array}
\ee
The desired property  for $AB$ now easily follows from the smoothness assumptions 
\reff{smooth} and \reff{tilde_dx}.  

Finally, we prove \reff{Fr20} for  the operator $DB$. 
Denote by $\tilde x(\tau,x,t)$ the value of $\xi$ 
where the characteristics $\om_2(\xi,1,\tau)$ and $\om_1(\xi,x,t)$ intersect, namely
$$
\om_2(\tilde x(\tau,x,t),1,\tau)=\om_1(\tilde x(\tau,x,t),x,t).
$$
It follows from \reff{smooth}  that the function $\tilde x(\tau,x,t)$ is continuously differentiable
in its arguments.
Furthermore,
\begin{eqnarray}
\d_\tau\tilde x(\tau,x,t)&=&\frac{\d_3\om_2(\tilde x(\tau,x,t),1,\tau)}
{\d_1\om_1(\tilde x(\tau,x,t),x,t)-\d_1\om_2(\tilde x(\tau,x,t),1,\tau)} 
\label{x_tilde_tau}\\
&
=&-\frac{a\left(\tilde x(\tau,x,t),\om_1(\tilde x(\tau,x,t))\right)}{2}
\exp \int_ {\tilde x(\tau,x,t)}^1 
\left(\frac{\d_ta}{a^2}\right)(\eta,\om_2(\eta;1,\tau)) 
\,d \eta.\nonumber
\end{eqnarray}
Here we used \reff{char} and \reff{dx}. Similarly,
\begin{eqnarray}
\d_x\tilde x(\tau,x,t)=
\frac{a\left(\tilde x(\tau,x,t),\om_1(\tilde x(\tau,x,t))\right)}{2a(x,t)}
\exp \int_x^{\tilde x(\tau,x,t)} 
\left(\frac{\d_ta}{a^2}\right)(\eta,\om_1(\eta)) 
\,d \eta,\label{x_tilde_x}\\
\d_t\tilde x(\tau,x,t)=
\frac{a\left(\tilde x(\tau,x,t),\om_1(\tilde x(\tau,x,t))\right)}{2}
\exp \int_x^{\tilde x(\tau,x,t)} 
\left(\frac{\d_ta}{a^2}\right)(\eta,\om_1(\eta))
\,d \eta.\label{x_tilde_t}
\end{eqnarray}

By the definitions of $D$ and $B$ as well as \reff{DF}, to handle  $DB$,
it remains to treat the integrals of the type
\begin{eqnarray*}
\lefteqn{
\int_0^x d_1(\xi,x,t) b_{12}(\xi,\om_1(\xi))c_2(1,\xi,\om_1(\xi))
u_1(1,\om_2(1,\xi,\om_1(\xi)))\,d\xi
}\\
&&
=\int_{\om_2(1,0,\om_1(0))}^{\om_2(1)} d_1(\tilde x(\tau,x,t),x,t) 
b_{12}\left(\tilde x(\tau,x,t),\om_1(\tilde x(\tau,x,t))\right)\\
&&\times
c_2\left(1,\tilde x(\tau,x,t),\om_1(\tilde x(\tau,x,t))\right)
u_1(1,\tau)
\d_\tau\tilde x(\tau,x,t)
\,d\tau.
\end{eqnarray*}
On the account of \reff{x_tilde_tau}, \reff{x_tilde_x}, and \reff{x_tilde_t},
we immediately come to the desired conclusion, what completes the proof
of the lemma.
\end{proof}

\section{Higher regularity of solutions: proof of Theorem \ref{thm:reg}}

Here we address the issue  of a higher  regularity of  weak solutions in the case 
of a higher regularity of the coefficients 
in \reff{1} and \reff{3} and an additional number of non-resonance conditions. 
We therefore let \reff{a}, \reff{ar}, \reff{k-smooth}, and one of the 
conditions \reff{small1}, \reff{small11}, \reff{small111}, and \reff{small1111} to be fulfilled.

First note that the statement $(ii)$ of Theorem \ref{thm:reg} is a straightforward 
consequence of the statement $(i)$, since in the case of stationary $a$ 
we have $c_j^l\equiv c_j$ for all $l\ge 1$ and, hence, 
 condition \reff{small} implies either \reff{small1} or  \reff{small11} 
for any positive integer $k$.

On the account of the equivalence of the problems  \reff{1}--\reff{3} and  \reff{1s}--\reff{3s}
stated in Section \ref{sec:equival}, we are reduced 
 to prove that any weak solution $u$ to  \reff{1s}--\reff{3s}
reaches $C^{k}$-regularity. To this end, we introduce a couple of Banach spaces: 
Given a positive integer $i$, set
$$
\tilde C_T^i=\{u\in C_T\, :\,\d_t^iu\in C_T\} \quad\mbox{ and }\quad \tilde\CC^i=\tilde C_T^i\times\tilde  C_T^i.
$$

Let $u\in\CC$ be an arbitrary fixed weak solution to  \reff{1s}--\reff{3s}. The proof 
is by induction on the order of regularity
(the order of continuous differentiability) of the solutions. 

{\bf Base case:} {\it $u\in\CC^1$.} 
First show that the generalized directional derivatives $(\d_t-a\d_x)u_1$ and $(\d_t+a\d_x)u_2$, where 
 $\d_x$ and $\d_t$ denote the generalized derivatives,  is a continuous function; this reduces our task to proving
 that  $u\in\tilde\CC^1$. Take an arbitrary sequence $u^l\in\CC^{1}$ approaching
$u$ in $\CC$ and an arbitrary smooth function 
$\vphi: (0,1)\times(0,T)\to\R$ 
with compact support. Then
\begin{eqnarray*}
\lefteqn{
\langle (\d_t-a\d_x)u_1,\vphi\rangle = \langle u_1,-\d_t\vphi+\d_x(a\vphi)\rangle
= \lim_{l\to\infty}\left\langle u_1^l,-\d_t\vphi+\d_x(a\vphi)\right\rangle} \\ &&=
\lim_{l\to\infty}\Biggl\langle 
c_1(0,x,t)\Big[u_2^l(0,\om_1(0))
+2a(0,\om_1(0))r_0(\om_1(0))[Gu^l](\om_1(0))\Big]
\\
&&-\int_0^x d_1(\xi,x,t) b_{12}(\xi,\om_1(\xi))u_2^l(\xi,\om_1(\xi))d\xi\\ 
&&+\int_0^x d_1(\xi,x,t)\Bigl(f(\xi,\om_1(\xi))-[a_3Fu^l](\xi,\om_1(\xi))\Bigr)\,d\xi,
-\d_t\vphi+\d_x(a\vphi)\Biggr\rangle \\ &&=
\lim_{l\to\infty}\left\langle -b_{11}(x,t)u_1^l- b_{12}(x,t)u_2^l + f(x,t)-[a_3Fu^l](x,t),\vphi\right\rangle 
\\ &&
=\left\langle -b_{11}(x,t)u_1- b_{12}(x,t)u_2 + f(x,t)-[a_3Fu](x,t),\vphi\right\rangle
\end{eqnarray*}
as desired.
Here we used the formula
$$
(\d_t+(-1)^ja(x,t)\d_x)\psi(\om_j(\xi,x,t))=0
$$
being true for all $j=1,2$, $\xi,x\in[0,1]$, $t\in\R$, and any $\psi\in C^1(\R)$.
 Similarly we compute the generalized directional derivative 
$(\d_t+a\d_x)u_2$. 

Therefore, the beginning step of the induction will follow from the fact 
that $u\in\tilde\CC^1$. To prove the latter,
we substitute \reff{abstr}  into the second and the third summands of
\reff{abstr} and get

\beq\label{final}
u=Bu+\left(A^2+AB+AD+DB+DA+D^2\right)u+(I+A+D)Rf.
\ee
On the account of the smoothing property \reff{Fr20} of the operators
$A^2$, $AB$, $AD$ $DB$, $DA$, and $D^2$, we are done if we 
show the bijectivity of $I-B\in\LL\left(\CC\right)$ from $\tilde\CC^1$ to $\tilde\CC^1$. 
In other words, we have to show that the system \reff{iso1}
is uniquely solvable in $\tilde\CC^1$ for any $(g_1,g_2)\in\tilde\CC^1$.
Following the argument as in the proof of Lemma~\ref{lem:iso},
the latter is true iff
\beq\label{contr1}
I-B^\prime \mbox{ is bijective from }   C_T^1(\R) \mbox{ to }   C_T^1(\R),
\ee
where the operator $B^\prime\in\LL\left(C_T(\R)\right)$ is given by
\beq
\left[B^\prime v\right](t)=c_1(0,1,t)c_2(1,0,\om_1(0,1,t)) v(\om_2(1,0,\om_1(0,1,t))).\nonumber
\ee

Now we aim to show \reff{contr1} whenever one of  the 
 conditions \reff{small1}, \reff{small11}, \reff{small111}, and \reff{small1111} with $k=1$ is satisfied.
We will prove the desired statement under the condition \reff{small1}.
A similar argument combined with the one used in the proof of Lemma~\ref{lem:iso}
works in the case of  \reff{small11}, \reff{small111}, or \reff{small1111} as well.

Thus, following the idea of  \cite{RauchReed81} (see also \cite{KR3})
used to establish  the solution regularity  for
first-order hyperbolic PDEs, given  $\be>0$, we  norm the space  $C_T^1(\R)$ with
\beq\label{beta}
\|v\|_{C_T^1(\R)}=\|v\|_\infty+\be\|\d_tv\|_\infty.
\ee
Note that  $C_T^1(\R)$ endowed with \reff{beta} is a Banach space.
Given $l=0,1,2,\dots$, set 
\begin{eqnarray}
q_l&=&\max_{x,y\in[0,1]}\max_{t\in\R}\left|c_1^l(0,1,t)c_2^l(1,0,\om_1(0,1,t))\right|,\nonumber\\
q_l^\prime&=&\max_{x,y\in[0,1]}\max_{t\in\R}\left|\frac{d}{dt}\left[c_1^l(0,1,t)c_2^l(1,0,\om_1(0,1,t))\right]\right|.
\label{ql_prime}\end{eqnarray}
We are reduced to prove that  there exist  constants $\be<1$ and $\ga<1$ such that
$$
\|B^\prime v\|_\infty+\be\left\|\frac{d}{dt}B^\prime v\right\|_\infty 
\le \ga\left(\|v\|_\infty+\be\|v^\prime\|_\infty\right) 
\mbox{ for all } v \in  C_T^1(\R).
$$
Taking into account that $\|B^\prime\|_{\LL(C_T(\R))}\le q_0<1$ (by assumption \reff{small}), 
the latter estimate will be proved if we show that
\beq
\label{cungl0}
\left\|\frac{d}{dt}B^\prime v\right\|_\infty \le \frac{\ga-q_0}{\be}\|v\|_\infty
+\ga\|v^\prime\|_\infty\quad \mbox{ for all } v\in  C_T^1(\R).
\ee
Since 
\begin{eqnarray*}
\label{C_t}
\lefteqn{
\frac{d}{dt}[B^\prime v](t)=\frac{d}{dt}\left[c_1(0,1,t)c_2(1,0,\om_1(0,1,t))\right] v(\om_2(1,0,\om_1(0,1,t)))}\\
&&+c_1(0,1,t)c_2(1,0,\om_1(0,1,t))\d_2\om_2(1,0,\om_1(0,1,t))\d_t\om_1(0,1,t)v^\prime(\om_2(1,0,\om_1(0,1,t))),
\end{eqnarray*}
we get
\beq
\label{510}
\left\|\frac{d}{dt}B^\prime v\right\|_\infty \le q_0^\prime\|v\|_\infty+q_1\|v^\prime\|_\infty.
\ee
By the assumption \reff{small1} we have $q_0<1$ and  $q_1<1$. Fix $\ga$ such that
$
\max\{q_0,q_1\} <\ga<1.
$
Then choose $\be$ so small that 
$$
q_0^\prime\le \frac{\ga-q_0}{\be}. 
$$
Then \reff{510} implies \reff{cungl0} as desired. 
The proof of the base case of the induction is therewith complete.
Notice that  the function $u$ now satisfies the system 
\reff{1s} pointwise. 

{\bf Induction assumption:}
{\it $u\in \CC^i$ for some $1\le i\le k-1$.}

{\bf Induction step:}
{\it $u\in \CC^{i+1}$.} By the induction assumption, the function $u$ satisfies the following 
system pointwise:
\beq\label{1s_i}
\begin{array}{lr}
\displaystyle\left(\partial_t  - a\partial_x\right)\d_t^{i-1}u_1 = 
- \left(b_{11} + (i-1)\frac{\d_ta}{a}\right)\d_t^{i-1}u_1 - b_{12}\d_t^{i-1}u_2 \\[3mm]
\displaystyle\qquad\quad + 
f_{1,i-1}\left(x,t,u,\d_tu,\dots,\d_t^{i-2}u\right)-\left[P_{i-1}u\right](x,t)
-\left[a_3J\d_t^{i-1}u\right](x,t), 
 \\[5mm]
\displaystyle
\left(\partial_t  + a\partial_x\right)\d_t^{i-1}u_2 =
- b_{21}\d_t^{i-1}u_1 - \left(b_{22} - (i-1)\frac{\d_ta}{a}\right)\d_t^{i-1}u_2  \\[3mm]
\displaystyle\qquad\quad + 
f_{2,i-1}\left(x,t,u,\d_tu,\dots,\d_t^{i-2}u\right)-\left[P_{i-1}u\right](x,t)
-\left[a_3J\d_t^{i-1}u\right](x,t)
\end{array}
\ee
with certain continuously differentiable functions $f_{1,i-1}$ and $f_{2,i-1}$ such that
$f_{1,0}=f_{2,0}\equiv 0$ and with the operators $P_i\in\LL(\CC^i)$ defined by
\beq
\begin{array}{cc}
\displaystyle [P_iu](x,t) = \d_t^ia_3\left[Iu\right](t)+\frac{1}{2}\d_t^{i-1}
\left[a_3(u_1(0,t)+u_2(0,t))\right]\nonumber\\\displaystyle+
\d_t^{i-1}\int_0^x\d_t\left(\frac{a_3(x,t)}{a(\xi,t)}\right)
\frac{u_1(\xi,t)-u_2(\xi,t)}{2}\,d\xi\nonumber
\end{array}
\ee
such that $[P_0u](x,t)\equiv 0$.
Using  \reff{k-smooth} and the induction assumption, we see that
the right-hand side of \reff{1s_i} is continuously differentiable in $t$.
Hence, the left-hand side is continuously differentiable in $t$ as well.
Note that the latter does not imply the existence of the pointwise derivatives
$\d_t^{i+1}u_j$ and $\d_x\d_t^{i}u_j$ for $j=1,2$, but only the distributional ones.
Set $v=\d_t^{i}u$. Then the continuous function $v$ satisfies the system
\beq\label{1s_i+1}
\begin{array}{lr}
\displaystyle\left(\partial_t  - a\partial_x\right)v_1 = 
- \left(b_{11} + i\frac{\d_ta}{a}\right)v_1 - b_{12}v_2 \\[3mm]
\displaystyle\qquad\quad + 
f_{1i}\left(x,t,u,\d_tu,\dots,\d_t^{i-1}u\right)-\left[P_iu\right](x,t)-
\left[a_3Jv\right](x,t), 
 \\[5mm]
\displaystyle
\left(\partial_t  + a\partial_x\right)v_2 =
- b_{21}v_1 - \left(b_{22} - i\frac{\d_ta}{a}\right)v_2  \\[3mm]
\displaystyle\qquad\quad + 
f_{2i}\left(x,t,u,\d_tu,\dots,\d_t^{i-1}u\right)-\left[P_iu\right](x,t)
-\left[a_3Jv\right](x,t)
\end{array}
\ee
in a distributional sense and the conditions 
\beq\label{2s_i+1}
v_j(x,t) = v_j(x,t+T),\quad j=1,2,
\ee
and
\beq\label{3s_i+1}
\begin{array}{llr}
\displaystyle
v_1(0,t) =\displaystyle v_2(0,t)+2\d_t^i\left(a(0,t)r_0(t)\right)[Gu](t)+
\d_t^{i-1}\left[a(0,t)r_0(t)(u_1(0,t)+u_2(0,t))\right],\\
\displaystyle
v_2(1,t) =\displaystyle v_1(1,t)-
 2\d_t^i(a(1,t)r_1(t))\left[Gu\right](t)-
\d_t^{i-1}\left[a(1,t)r_1(t)(u_1(0,t)+u_2(0,t))\right]\\ 
\displaystyle-\d_t^{i-1}\int_0^x\d_t\left(\frac{a(1,t)r_1(t)}{a(\xi,t)}\right)
\left(u_1(\xi,t)-u_2(\xi,t)\right)\,d\xi
-2a(1,t)r_1(t)\left[Jv\right](1,t)
\end{array}
\ee
pointwise. We rewrite the system \reff{1s_i+1}--\reff{3s_i+1} in the following
form:
\beq\label{1s_i+1_short}
\begin{array}{lr}
\displaystyle\left(\partial_t  - a\partial_x\right)v_1 = 
- \left(b_{11} + i\frac{\d_ta}{a}\right)v_1 - b_{12}v_2 
- \left[a_3Jv\right](x,t) + \left[Q_iu\right]_1(x,t), 
 \\[5mm]
\displaystyle
\left(\partial_t  + a\partial_x\right)v_2 =
- b_{21}v_1 - \left(b_{22} - i\frac{\d_ta}{a}\right)v_2  
-\left[a_3Jv\right](x,t) + \left[Q_iu\right]_2(x,t),
\end{array}
\ee
\beq\label{2s_i+1_short}
v_j(x,t) = v_j(x,t+T),\quad  j=1,2,
\ee
\beq\label{3s_i+1_short}
\begin{array}{rcl}
\displaystyle
v_1(0,t) &=&\displaystyle v_2(0,t)+\left[S_{i}u\right]_1(t),\\
\displaystyle
v_2(1,t) &=&\displaystyle v_1(1,t)+\left[S_{i}u\right]_2(t)
-2a(1,t)r_1(t)\left[Jv\right](1,t),
\end{array}
\ee
where the operators $Q_i,S_{i}\in\LL(\CC^i)$ are defined 
by 
\begin{eqnarray}
 \left[Q_iu\right](x,t)&=&\Bigl(f_{1i}\left(x,t,u,\d_tu,\dots,\d_t^{i-1}u\right)-\left[P_iu\right](x,t),
\nonumber\\ &&
f_{2i}\left(x,t,u,\d_tu,\dots,\d_t^{i-1}u\right)-\left[P_iu\right](x,t)\Bigr),\label{NQ}\\
\left[S_{i}u\right](t)&=&\Biggl(2\d_t^i\left(a(0,t)r_0(t)\right)[Gu](t)+
\d_t^{i-1}\left(a(0,t)r_0(t)(u_1(0,t)+u_2(0,t))\right),\nonumber\\
&&-
 2\d_t^i(a(1,t)r_1(t))\left[Gu\right](t)-
\d_t^{i-1}\left[a(1,t)r_1(t)(u_1(0,t)+u_2(0,t))\right]\nonumber\\
&&-\d_t^{i-1}\int_0^x\d_t\left(\frac{a(1,t)r_1(t)}{a(\xi,t)}\right)
\left(u_1(\xi,t)-u_2(\xi,t)\right)\,d\xi\Biggr).\label{NS}
\end{eqnarray}
It follows, in particular, that
\beq\label{*}
\begin{array}{ll}
 \mbox{ Given } u\in\CC^i,  
 \mbox{ the functions } \left[Q_iu\right]_1(x,t),  \left[Q_iu\right]_2(x,t),\\
\left[S_{i}u\right]_1(t),  \mbox{ and }
\left[S_{i}u\right]_2(t)  \mbox{ are continuously differentiable.}
\end{array}
\ee
We intend to show that the variational problem \reff{1s_i+1_short}--\reff{3s_i+1_short}
is equivalent to the following integral system:
\begin{eqnarray}
\label{rep1_i}
v_1(x,t)&=& c_1^i(0,x,t)\Big[v_2(0,\om_1(0))
+[S_{i}u]_1(\om_1(0))\Big]
\nonumber\\
&&-\int_0^x d_1^i(\xi,x,t) \Bigl(b_{12}v_2+[a_3Jv]-[Q_iu]_1\Bigr)
(\xi,\om_1(\xi))\,d\xi,
\end{eqnarray}
\begin{eqnarray}
\label{rep2_i}
v_2(x,t)&=&c_2^i(1,x,t)\Bigl[
 v_1(1,\om_2(1))
+[S_{i}u]_2(\om_2(1))
-2r_1(\om_2(1))\left[aJv\right](1,\om_2(1))\Bigr]
\nonumber\\
&&-\int_1^x d_2^i(\xi,x,t) \Bigl(b_{21}v_1-[a_3Jv]+[Q_iu]_2\Bigr)
(\xi,\om_2(\xi))\,d\xi,
\end{eqnarray}
where
\begin{eqnarray}
d_j^i(\xi,x,t)=\frac{(-1)^jc_j^i(\xi,x,t)}{a(\xi,\om_j(\xi,x,t))}.\nonumber
\end{eqnarray}

In other words, any function $u\in\CC^i$ satisfies \reff{1s_i+1_short}--\reff{3s_i+1_short}
in a distributional sense if and only if $u$ satisfies \reff{rep1_i}--\reff{rep2_i}
pointwise. 

To show the sufficiency, take an arbitrary sequence $u^l\in\CC^{i+1}$ approaching
$u$ in $\CC^i$ and write $v^l=\d_t^iu^l$. Then, taking into account \reff{*}, for any smooth function 
$\vphi: (0,1)\times(0,T)\to\R$ 
with compact support we have
\begin{eqnarray*}
\lefteqn{
\langle (\d_t-a\d_x)v_1,\vphi\rangle = \langle v_1,-\d_t\vphi+\d_x(a\vphi)\rangle
= \lim_{l\to\infty}\left\langle v_1^l,-\d_t\vphi+\d_x(a\vphi)\right\rangle} \\ &&=
\lim_{l\to\infty}\Biggl\langle 
c_1^i(0,x,t)\Big[v_2^l(0,\om_1(0))
+[S_{i}u]_1(\om_1(0))\Big]
\\
&&-\int_0^x d_1^i(\xi,x,t) b_{12}(\xi,\om_1(\xi))v_2^l(\xi,\om_1(\xi))d\xi\\ 
&&-\int_0^x d_1^i(\xi,x,t)\Bigl([a_3Jv^l]-[Q_iu]_1\Bigr)(\xi,\om_1(\xi))\,d\xi,
-\d_t\vphi+\d_x(a\vphi)\Biggr\rangle \\ &&=
\lim_{l\to\infty}\left\langle - \left(b_{11} + i\frac{\d_ta}{a}\right)v_1^l - b_{12}v_2^l
- \left[a_3Jv^l\right](x,t) + \left[Q_iu\right]_1(x,t),\vphi\right\rangle \\ &&=
\left\langle - \left(b_{11} + i\frac{\d_ta}{a}\right)v_1 - b_{12}v_2
- \left[a_3Jv\right](x,t) + \left[Q_iu\right]_1(x,t),\vphi\right\rangle.
\end{eqnarray*}
 Similarly we compute the generalized directional derivative 
$(\d_t+a(x,t)\d_x)u_2$. The sufficiency is thereby proved.

To show the necessity, assume that $u\in\CC^i$ satisfies 
\reff{1s_i+1_short}--\reff{3s_i+1_short} in a distributional sense.
Without destroying the equalities in $\D^\prime$, we  rewrite the system
 \reff{1s_i+1_short} in the form 
\beq
\begin{array}{lr}
\displaystyle\left(\partial_t  - a\partial_x\right) \left(c_1^i(1,x,t)v_1\right) = 
 c_1^i(1,x,t)\Bigl(- b_{12}v_2 
- \left[a_3Jv\right](x,t) + \left[Q_iu\right]_1(x,t)\Bigr), \nonumber
 \\[5mm]
\displaystyle
\left(\partial_t  + a\partial_x\right)\left(c_2^i(0,x,t)v_2\right) =
c_2^i(0,x,t)\Bigl(- b_{21}v_1  - \left[a_3Jv\right](x,t) + \left[Q_iu\right]_2(x,t)\Bigr).\nonumber
\end{array}
\ee
To prove that $v$ satisfies \reff{rep1_i}--\reff{rep2_i}
pointwise, we use  the constancy theorem of distribution theory claiming
that any distribution on an open set with zero generalized derivatives
is a constant on any connected component of the set. Hence, the
sums
\begin{eqnarray*}
v_1(x,t)+\int_0^x d_1^i(\xi,x,t)\Bigl(b_{12}+[a_3Jv]-[Q_iu]_1\Bigr)(\xi,\om_1(\xi))\,d\xi
\end{eqnarray*}
and
\begin{eqnarray*}
v_2(x,t)+\int_1^x d_2^i(\xi,x,t)\Bigl(b_{21}+[a_3Jv]-[Q_iu]_2\Bigr)(\xi,\om_2(\xi))\,d\xi
\end{eqnarray*}
are constants along the characteristics $\om_1(\xi,x,t)$ and $\om_2(\xi,x,t)$,
respectively. Since they are  continuous functions and the traces 
$v_1(0,t)$ and $v_2(1,t)$ are given by \reff{3s_i+1_short}, it follows that
 $v$ satisfies the system \reff{rep1_i}--\reff{rep2_i} as desired.

We are therefore reduced to prove that the function $v$ satisfying the system
\reff{rep1_i}--\reff{rep2_i} is continuously differentiable. To this end, 
for $i\ge 1$ we introduce
linear bounded operators $B_i,A_i,D_i,R_i: \CC \to \CC$ 
by
\begin{eqnarray}
&&[B_iv](x,t)=\Bigl(c_1^i(0,x,t)v_2(0,\om_1(0)),c_2^i(1,x,t) v_1(1,\om_2(1))\Bigr)\nonumber\\
\label{Aidef}
&&[A_iv](x,t)=\bigg(0,-2c_2^i(1,x,t)r_1(\om_2(1))[aJv](1,\om_2(1))\bigg),\\
&&[D_iu](x,t)=\Bigg(
-\int_0^x d_1^i(\xi,x,t) \left(b_{12}v_2-[a_3Jv]\right)(\xi,\om_1(\xi))\,d\xi, \nonumber\\
&&\qquad\qquad-\int_1^x d_2^i(\xi,x,t) \left(b_{21}v_1-[a_3Jv]\right)(\xi,\om_2(\xi))\,d\xi
\Bigg),\nonumber\\
\label{Gidef}
&&[R_iu](x,t)= \Bigg(c_1^i(0,x,t)[S_{i}u]_1(\om_1(0))
+\int_0^x d_1^i(\xi,x,t) \left[Q_iu\right]_1
(\xi,\om_1(\xi))\,d\xi, \nonumber\\
&&\qquad\qquad c_2^i(1,x,t)[S_{i}u]_2(\om_2(1))
-\int_1^x d_2^i(\xi,x,t)\left[Q_iu\right]_2
(\xi,\om_2(\xi))\,d\xi\Bigg).
\end{eqnarray}
and rewrite \reff{rep1_i}--\reff{rep2_i} in the operator form
\beq\label{abstr_i}
v=B_iv+A_iv+D_iv+R_iu.
\ee
Similarly to the base case of the induction, we will use the following equation for $v$
(the analog of \reff{final})
\beq
v=B_iv+\left(A_i^2+A_iB_i+A_iD_i+D_iB_i+D_iA_i+D_i^2\right)v+(I+A_i+D_i)R_iu,\nonumber
\ee
resulting from \reff{abstr_i}, and prove that it is uniquely solvable in $\tilde\CC^1$.
To this end, it is sufficient to show that
\beq\label{isom}
I-B_i \mbox{  is bijective from } \tilde\CC^1 \mbox{  to } \tilde\CC^1
\ee
and that the operators
\beq
\label{Fr20i}
A_i^2, D_i^2, A_iD_i, D_iA_i, A_iB_i, \mbox{ and } D_iB_i  
\mbox{ map  continuously } \CC \mbox{ into } \tilde\CC^1.
\ee

To prove  \reff{Fr20i}, we follow a similar argument 
as in the proof of the corresponding  property  \reff{Fr20} in the 
base case of the induction.
The only difference is that now in all the calculations involved
we use $c_j^i$ and $d_j^i$ in place of $c_j$ and $d_j$, respectively. In particular, 
 to prove smoothing property \reff{Fr20i} for $A_i^2$, on the 
account of the definitions of $A$ and $A_i$, we can follow the argument 
as in the proof of this property for $A^2$ and, hence, reduce the problem to  the one for the operator 
$JA_i$
 where
the operator $A_i$ is defined by \reff{Aidef}. 

It remains to prove the bijectivity property \reff{isom}. 
Again, following the same argument as in the base case, we actually have 
to show that the system
$$
v(t)=\left[B_i^\prime v\right](t)+g(t),
$$
where the operator $B_i^\prime\in\LL\left(C_T(\R)\right)$ is given by
\beq
\left[B_i^\prime v\right](t)=c_1^i(0,1,t)c_2^i(1,0,\om_1(0,1,t)) v(\om_2(1,0,\om_1(0,1,t))),\nonumber
\ee
is uniquely solvable in $C_T^1(\R)$ for any $g\in C_T^1(\R)$.
The latter is true iff
\beq\label{contr1i}
I-B_i^\prime \mbox{ is bijective from }   C_T^1(\R) \mbox{ to }   C_T^1(\R).
\ee
Obviously, \reff{contr1i} is true whenever
\beq\label{contr2}
\bigl\|B_i^\prime \bigr\|_{\LL\left(C_T^1(\R)\right)}<1.
\ee
Now we show that \reff{contr2} is a consequence of  the 
 contraction condition \reff{small1} with $l=i,i+1$.
Similarly to the above we will again norm the space $C_T^1(\R)$ with \reff{beta}.
The proof is completed by showing that  there exist  constants $\ga_i<1$ and 
$\be_i<1$ such that 
\beq\label{le}
\|B_i^\prime v\|_\infty+\be_i
\left\|\frac{d}{dt}B_i^\prime v\right\|_\infty \le \ga_i\left(\|v\|_\infty+
\be_i\left\|v^\prime\right\|_\infty\right) 
\mbox{ for all } v \in C_T^1(\R).
\ee
By  assumption \reff{small1},  $\|B_i^\prime\|_{\LL\left(C_T(\R)\right)}\le  q_i<1$.
Thus, the  estimate  \reff{le} will be proved if we show that
\beq
\label{cungl}
\left\|\frac{d}{dt}B_i^\prime v\right\|_\infty \le \frac{\ga_{i}-\tilde q_i}{\be_i}
\|v\|_\infty
+\ga_ii\left\|v^\prime\right\|_\infty\quad \mbox{ for all } v \in C_T^1(\R).
\ee
Since 
\begin{eqnarray*}
\lefteqn{
\frac{d}{dt}[(B_i^\prime v)(t)]=\frac{d}{dt}\Bigl[c_1^i(0,1,t)c_2^i(1,0,\om_1(0,1,t))\Bigr] 
v(\om_2(1,0,\om_1(0,1,t)))}\\
&&+c_1^i(0,x,t)c_2^i(1,0,\om_1(0,1,t))\Bigl[\d_2\om_2(1,0,\om_1(0,1,t))\d_t\om_1(0,1,t)\Bigr]^i\\
&&\times 
v^\prime(\om_2(1,0,\om_1(0,1,t))),
\end{eqnarray*}
we get
\beq
\label{511}
\left\|\frac{d}{dt}B_i^\prime v\right\|_\infty \le 
q_i^\prime\|v\|_\infty+q_{i+1}\left\|v^\prime\right\|_\infty,
\ee
where
$
q_i^\prime
$
is given by \reff{ql_prime}.
By assumption \reff{small1} we have $q_{i}<1$ and $q_{i+1}<1$. Fix $\ga_i$ such that
$
\max\{q_i,q_{i+1}\} <\ga_i<1.
$
Then choose $\be_i$ so small that 
$$
q_i^\prime\le \frac{\ga_{i}-q_i}{\be_i}. 
$$
Finally, \reff{511} implies \reff{cungl}, what also finishes the proof of the
bijectivity property  of $I-B_i \in\LL\left(\tilde\CC^1\right)$.

\section{Smooth dependence on the data: proof of Theorem~\ref{thm:dep}}

Here we establish smooth dependence of solutions to \reff{1}--\reff{3} on
the coefficients of \reff{1} and \reff{3}. With this aim in Section~\ref{sec:setting}
we introduced a small
parameter  $\eps\ge 0$ responsible for small perturbations of the coefficients.
We therefore consider the perturbed problems \reff{1eps}--\reff{3eps} 
and  \reff{1s_eps}--\reff{3s_eps}.

 In what follows we will use the following notation:
\begin{eqnarray}
\label{cdef_ieps}
\label{Bdef1}
[B(\eps)u](x,t)&=&\Bigl(c_1^{\eps}(0,x,t)u_2(0,\om_1^{\eps}(0)),c_2^{\eps}(1,x,t) u_1(1,\om_2^{\eps}(1))\Bigr),\\
\label{Adef_eps}
[A(\eps)u](x,t)&=&\bigg(
2c_1^{\eps}(0,x,t)a^{\eps}(0,\om_1^{\eps}(0))r_0^{\eps}(\om_1^{\eps}(0))[G(\eps)u](\om_1^{\eps}(0)),\nonumber\\
&&-2c_2^{\eps}(1,x,t)a^{\eps}(1,\om_2^{\eps}(1))r_1(\om_2^{\eps}(1))[F(\eps)u](1,\om_2^{\eps}(1))
\bigg),\\
\label{Ddef_eps}
[D(\eps)u](x,t)&=&\Bigg(
-\int_0^x d_1^{\eps}(\xi,x,t) \left(b_{12}^{\eps}u_2-[a_3^{\eps}F(\eps)u]\right)(\xi,\om_1^{\eps}(\xi))\,d\xi, \nonumber\\
&&
-\int_1^x d_2^{\eps}(\xi,x,t) \left(b_{21}^{\eps}u_1-[a_3^{\eps}F(\eps)u]\right)(\xi,\om_2^{\eps}(\xi))\,d\xi
\Bigg),\\
\label{G_eps}
[G(\eps)u](t)&=& [Iu](t)+N(\eps)u,\\
\label{F_eps}
[F(\eps)u](x,t)&=& [G(\eps)u](t)+[J(\eps)u](x,t),
\\
\label{J_eps}
[J(\eps)u](x,t)&=& \int_0^x\frac{u_1(\xi,t)-u_2(\xi,t)}{2a^{\eps}(\xi,t)}\,d\xi,\\
\label{N_eps}
N(\eps)u&=& \frac{1}{C}\int_0^T\left(\frac{u_1(0,t)-u_2(0,t)}{2}-a^{\eps}(0,t)r_0^{\eps}(t)[Iu](t)\right)\,dt,
\\
c_j^{i\eps}(\xi,x,t)&=&\exp \int_x^\xi(-1)^j
\left(\frac{b_{jj}^{\eps}}{a^{\eps}}-i\frac{\d_ta^{\eps}}{a^{\eps 2}}\right)(\eta,\om_j^{\eps}(\eta,x,t))\,d\eta,\\
\label{ddef_ieps}
d_j^{i\eps}(\xi,x,t)&=&\frac{(-1)^jc_j^{i\eps}(\xi,x,t)}{a^{\eps}(\xi,\om_j^{\eps}(\xi,x,t))},
\\
\label{Bdef_eps}
[B_i(\eps)u](x,t)&=&\Bigl(c_1^{i\eps}(0,x,t)u_2(0,\om_1^{\eps}(0)),c_2^{i\eps}(1,x,t) u_1(1,\om_2^{\eps}(1))\Bigr),\\
\label{Aidef_eps}
[A_i(\eps)v](x,t)&=&\bigg(0,
-2c_2^{i\eps}(1,x,t)r_1^{\eps}(\om_2(1))[a^{\eps}J(\eps)v](1,\om_2^{\eps}(1))
\bigg),\\
\label{Didef_eps}
[D_i(\eps)u](x,t)&=&\Bigg(
-\int_0^x d_1^{i\eps}(\xi,x,t) \left(b_{12}^{\eps}v_2-[a_3^{\eps}J(\eps)v]\right)(\xi,\om_1^{\eps}(\xi))\,d\xi, \nonumber\\
&&
-\int_1^x d_2^{i\eps}(\xi,x,t) \left(b_{21}^{\eps}v_1-[a_3^{\eps}J(\eps)v]\right)(\xi,\om_2^{\eps}(\xi))\,d\xi
\Bigg),
\end{eqnarray}
where $i=1,2,\dots$ and $\om_j^{\eps}(\xi,x,t)$ is the solution to the initial value 
problem \reff{char} with $a^\eps$ in
place of $a$.

From the definitions of $c_j^\eps$ and $\om_j^\eps$ and  the regularity assumption \reff{k-smooth_dep}
one can easily derive the bounds (needed to prove Lemma~\ref{cl:iso} below)
\beq\label{0_eps}
\begin{array}{cc}
\left\|c_1^{i\eps^\prime}(0,x,t)-c_1^{i\eps^{\prime\prime}}(0,x,t)\right\|_{\CC}=
O\left(\left|\eps^\prime-\eps^{\prime\prime}\right|\right),\\[3mm]
\left\|c_2^{i\eps^\prime}(1,x,t)-c_2^{i\eps^{\prime\prime}}(1,x,t)\right\|_{\CC}=
O\left(\left|\eps^\prime-\eps^{\prime\prime}\right|\right),\\[3mm]
\left\|\om_1^{\eps^\prime}(0)-\om_1^{\eps^{\prime\prime}}(0)\right\|_{\CC}=
O\left(\left|\eps^\prime-\eps^{\prime\prime}\right|\right),\quad
\left\|\om_2^{\eps^\prime}(1)-\om_2^{\eps^{\prime\prime}}(1)\right\|_{\CC}=
O\left(\left|\eps^\prime-\eps^{\prime\prime}\right|\right),
\end{array}
\ee
being true for $j=1,2$,  all $\eps^\prime,\eps^{\prime\prime}<1$ and all nonnegative integers $i\le k$.

In this section without restriction of generality we will work under the assumptions 
\reff{a}, \reff{ar},  \reff{small1}, and  \reff{k-smooth_dep}. A similar argument works
if we replace \reff{small1} by
one of the conditions \reff{small11}, \reff{small111}, and \reff{small1111}.
One can easily check that in the case of $t$-idependent $a$ the following is true:
if one of the conditions \reff{small} and \reff{small+} is fulfilled,
then one of the conditions  \reff{small1},
 \reff{small11},  \reff{small111}, and  \reff{small1111} is fulfilled as well. This fact together with
Theorem \ref{thm:Fredh}, Theorem  \ref{thm:reg} $(ii)$, and Theorem~\ref{thm:dep} $(i)$ entail 
Theorem~\ref{thm:dep} $(ii)$.

To state $(i)$, it suffices to prove  the  the
 smooth dependence result for $u^\eps$ on~$\eps$:
the value of $\eps_0$ can be chosen so small that for all $\eps\le\eps_0$ 
 there exists a unique weak solution 
to (\ref{1s_eps})--(\ref{3s_eps}) which belongs to  $C_T^{k}$, and  the map
$
\eps\in[0,\eps_0)\mapsto u^\eps\in \CC^{k-\ga-1}
$
is $C^\ga$-smooth for all non-negative integers $\ga\le k-1$.

Note that  conditions \reff{a}, \reff{ar}, and \reff{small1} are stable 
with respect to small perturbations of all functions contributing into them.
Fix $\eps_0$ so small that those conditions 
are fulfilled for all  $\eps\le\eps_0$ with $a(x,t)$, $a_1(x,t)$, $a_2(x,t)$, and
 $r_0(t)$  replaced by 
$a(x,t,\eps)$, $a_1(x,t,\eps)$, $a_2(x,t,\eps)$, and
$r_0(t,\eps)$, respectively. 
Then for all
$\eps\le\eps_0$
all conditions of Theorems  \ref{thm:Fredh} and  \ref{thm:reg} are fulfilled.
It follows that, given $\eps\le\eps_0$,
  there exists a weak solution 
to (\ref{1s_eps})--(\ref{3s_eps}) which belongs to~$C_T^{k}$. The uniqueness of the weak solution 
will follow from  the bijectivity property of the  operator
$I-C(\eps)-A(\eps)-D(\eps)$.

\begin{lem}\label{cl:iso}
{\bf (i)} There is $\eps_0>0$ such that for all $\eps\le\eps_0$ 
the operator $I-B(\eps)-A(\eps)-D(\eps)$ is bijective from $\CC^1$ to  $\CC^1$
and satisfies the  estimate
\beq\label{apr}
 \left\| \left(I-B(\eps)-A(\eps)-
D(\eps)\right)^{-1}\right\|_{\LL(\CC^1)}=O(1)
\ee
uniformly in $\eps\le\eps_0$.

{\bf (ii)} Given $i\le k-1$, there is $\eps_0>0$  such that for all $\eps\le\eps_0$ 
the operator $I-B_i(\eps)-A_i(\eps)-D_i(\eps)$ is  bijective from $\CC^1$ to  $\CC^1$
and satisfies the  estimate
\beq\label{apr1}
 \left\| \left(I-B_i(\eps)-A_i(\eps)-
D_i(\eps)\right)^{-1}\right\|_{\LL(\CC^1)}=O(1)
\ee
uniformly in $\eps\le\eps_0$.

{\bf (iii)} Given $i\le k$, there is $\eps_0>0$  such that for all $\eps\le\eps_0$ 
the operator $I-B(\eps)-A(\eps)-D(\eps)$ is  bijective from $\CC^i$ to $\CC^i$
and satisfies the  estimate
\beq\label{apr2}
 \left\| \left(I-B(\eps)-A(\eps)-
D(\eps)\right)^{-1}\right\|_{\LL(\CC^i)}=O(1)
\ee
uniformly in $\eps\le\eps_0$.
\end{lem}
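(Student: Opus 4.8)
The three assertions share one skeleton: rerun the proofs of Lemma~\ref{lem:iso}, Lemma~\ref{lem:comp} and the induction of Section~\ref{sec:Fredh}--\S 4 with $a,a_1,a_2,a_3,r_0,r_1$ replaced by their $\eps$-perturbations, keeping track of the dependence on $\eps$, and add two ingredients special to the perturbed setting: uniformity of all constants in $\eps$, and persistence of bijectivity (which at $\eps=0$ rests on $\dim\KK_w=0$). I would first fix $\eps_0$. The conditions \reff{a}, \reff{ar} and \reff{small1} are \emph{open} conditions on the coefficients, so by \reff{k-smooth_dep} and the estimates \reff{0_eps} they hold for all $\eps\le\eps_0$, with the contraction constants $q_l<1$ and the quantities $q_l^\prime$ (cf.\ \reff{ql_prime}) bounded uniformly in $\eps$ for $l=0,\dots,k$. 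Running the Banach fixed point argument of Lemma~\ref{lem:iso}, and that of the base/induction step of \S 4, for the $\eps$-dependent scalar reductions $B^\prime(\eps)$ and $B_i^\prime(\eps)$ on $C_T(\R)$ and on $C_T^1(\R)$ normed by \reff{beta} --- with a single $\be$ chosen small enough to work for all $\eps\le\eps_0$, which is possible precisely because the $q_l^\prime$ are uniformly bounded --- then shows that $I-B(\eps)$ is bijective on $\CC$, on $\tilde\CC^1$, on $\CC^1$ and on $\CC^i$, and that $I-B_i(\eps)$ is bijective on $\tilde\CC^1$ and on $\CC^1$, all with inverses bounded uniformly in $\eps\le\eps_0$.

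Next I would prove the \emph{uniform} smoothing property: the operators occurring in \reff{Fr20} and \reff{Fr20i}, built with the $\eps$-perturbed data, map bounded subsets of $\CC$ (resp.\ $\CC^i$) into one fixed bounded subset of $\CC^1$ (resp.\ $\CC^{i+1}$) for all $\eps\le\eps_0$. This is the computation of Lemma~\ref{lem:comp} (change of variables along the $\eps$-characteristics, Fubini, integration by parts), in which every kernel is assembled from $c_j^{i\eps}$, $d_j^{i\eps}$, $\om_j^\eps$ and their $x,t$-derivatives, and by \reff{k-smooth_dep} together with \reff{0_eps} these depend continuously on $\eps$ over the compact parameter interval, hence are uniformly bounded there. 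Consequently $\bigl[(I-B(\eps))^{-1}(A(\eps)+D(\eps))\bigr]^2$, and the analogous operator with $B_i,A_i,D_i$, map bounded sets into a fixed bounded subset of $\CC^1$ (resp.\ $\CC^{i+1}$), which by Arzel\`a--Ascoli is relatively compact in $\CC$ (resp.\ $\CC^i$). Factoring $I-B(\eps)-A(\eps)-D(\eps)=(I-B(\eps))\bigl(I-(I-B(\eps))^{-1}(A(\eps)+D(\eps))\bigr)$ and invoking \cite[Theorem~XIII.5.2]{KA} as in \S\ref{sec:Fredh} yields that $I-B(\eps)-A(\eps)-D(\eps)$ is Fredholm of index zero on $\CC^1$ and on $\CC^i$, and likewise $I-B_i(\eps)-A_i(\eps)-D_i(\eps)$ on $\CC^1$.

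Finally I would upgrade ``Fredholm of index zero'' to ``bijective with uniformly bounded inverse'' by a compactness--contradiction argument. At $\eps=0$: since $\KK_u\cong\KK_w$ has dimension zero, $I-B-A-D$ is injective on $\CC$, hence (by Theorem~\ref{thm:reg} and Remark~\ref{rem:reg}, using that \reff{k-smooth_dep} grants the regularity hypotheses for the perturbed coefficients) injective and therefore bijective on $\CC^1$ and on $\CC^i$ for $i\le k$; the auxiliary $I-B_i-A_i-D_i$ is bijective on $\CC^1$ by the induction step of \S 4. If, along some $\eps_n\to0$, one of these operators failed to be injective, or its inverse had norm tending to $\infty$, I would normalise to obtain $u_n$ with norm $1$ in the relevant space and $(I-B(\eps_n)-A(\eps_n)-D(\eps_n))u_n\to0$; applying $(I-B(\eps_n))^{-1}(A(\eps_n)+D(\eps_n))$ twice and using the uniform smoothing above, $\{u_n\}$ is relatively compact, so a subsequence converges to some $u$ of norm $1$, and by \reff{0_eps} the operators converge in operator norm, giving $(I-B(0)-A(0)-D(0))u=0$ --- contradicting injectivity at $\eps=0$. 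The same scheme with $B_i,A_i,D_i$ and with $\CC^i$ in place of $\CC^1$ handles (ii) and (iii), and it delivers the uniform bounds \reff{apr}, \reff{apr1}, \reff{apr2} (apply it to unit data $g_n$ in the range space with $\|(\,\cdot\,)^{-1}g_n\|\to\infty$).

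\textbf{Main obstacle.} The delicate part is not any individual step but the bookkeeping needed to make \emph{every} constant uniform in $\eps$ --- above all, checking that the smoothing estimates of Lemma~\ref{lem:comp} and their higher-order analogues feeding (ii) and (iii) have bounds controlled solely by \reff{k-smooth_dep} and \reff{0_eps}, so that the squared operators are \emph{equi}-compact. Once that is in place, the factorisation, the Fredholm criterion and the compactness--contradiction arguments are routine.
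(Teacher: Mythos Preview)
Your plan is essentially the paper's own argument: fix $\eps_0$ so that the open conditions persist, get uniform invertibility of $I-B(\eps)$ and $I-B_i(\eps)$ on the relevant spaces, lift the smoothing computations of Lemma~\ref{lem:comp} verbatim to the $\eps$-perturbed kernels with uniform bounds, deduce Fredholmness of index zero via the same factorisation, and finish injectivity plus the uniform estimates \reff{apr}--\reff{apr2} by a normalised-sequence/compactness contradiction passing to the limit $\eps\to0$. The paper carries out exactly this scheme, including the detail you single out as the main obstacle (uniformity of the smoothing constants) and the passage to the limit, where it works explicitly in $\LL(\CC^1,\CC)$ rather than $\LL(\CC)$ --- your phrase ``the operators converge in operator norm'' should be read in that sense, since $B(\eps)\to B(0)$ fails in $\LL(\CC)$ because of the shifted characteristics.

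One point in your plan does need more than a reference to \S4: you assert that $I-B_i-A_i-D_i$ is bijective on $\CC^1$ at $\eps=0$ ``by the induction step of \S4''. Section~4 only shows that $I-B_i$ is bijective on $\tilde\CC^1$ and that \emph{if} $u$ solves the original problem then $v=\partial_t^iu$ satisfies $v=(B_i+A_i+D_i)v+R_iu$; it does not establish that the homogeneous equation $v=(B_i+A_i+D_i)v$ has only the trivial solution. The paper closes this gap with a separate short induction (see \reff{5.1}): from $\dim\KK_u=0$ and the $\CC^k$-regularity of solutions one gets unique solvability in $\CC^k$ of the full system, which is equivalent to unique solvability in $\CC$ of the triangular system for $(u,\partial_tu,\dots,\partial_t^iu)$, and the last equation of that system forces $I-B_i-A_i-D_i$ to be bijective on $\CC$. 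You should insert this argument; once it is in place, your contradiction step for (ii) goes through exactly as you describe.
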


\begin{proof} {\bf (i)}
Recall that within the assumptions of Theorem~\ref{thm:dep} we meet all conditions of 
Theorem \ref{thm:Fredh} 
 whenever $\eps\le\eps_0$. Hence, given $\eps\le\eps_0$, the operator of 
the problem \reff{1s_eps}--\reff{3s_eps} or, the same, the operator
$I-B(\eps)-A(\eps)-D(\eps)$ 
 is Fredholm from $\CC$ to $\CC$. 
If $\dim\ker\left(I-B(\eps)-A(\eps)-D(\eps)\right)=0$, then it is bijective from
$\CC$ to $\CC$.
From  Theorem~\ref{thm:reg} it follows that $I-B(\eps)-A(\eps)-D(\eps)$ 
 is surjective  from $\CC^1$ onto $\CC^1$. Hence, it remains to show the injectivity of 
$I-B(\eps)-A(\eps)-D(\eps)$  from $\CC$ to $\CC$, what is the same,  from $\CC^1$ onto $\CC^1$.

Suppose, on the contrary, that
there exist sequences $\eps_n\to_{n\to\infty}0$ and $u^n\in \CC^1$ such that 
\beq\label{contr0}
\|u^n\|_{\CC^1}=1
\ee
 and
\beq\label{contrr1}
u^n=B(\eps_n)u^n+A(\eps_n)u^n+D(\eps_n)u^n.
\ee
Hence,
\beq\label{contrr2}
u^n=B(\eps_n)u^n+\Bigl(A(\eps_n)+D(\eps_n)\Bigr)\Bigl(B(\eps_n)+A(\eps_n)+D(\eps_n)\Bigr)u^n.
\ee
Due to the choice of $\eps_0$, the operators $I-B(\eps)\in\LL(\CC)$ and $I-B(\eps)\in\LL(\CC^1)$ are
 invertible and satisfy the estimates
\beq\label{contr3}
\left\|(I-B(\eps))^{-1}\right\|_{\LL(\CC)}=O(1),\quad
\left\|(I-B(\eps))^{-1}\right\|_{\LL(\CC^1)}=O(1),
\ee
being uniform in $\eps\le\eps_0$. Thus, we are able to rewrite \reff{contrr2} 
as follows:
\beq\label{contr4}
\begin{array}{ccl}
u^n&=&\left(I-B(\eps_n)\right)^{-1}\Bigl(A(\eps_n)+D(\eps_n)\Bigr)\Bigl(B(\eps_n)+A(\eps_n)+
D(\eps_n)\Bigr)u^n\\
&=& \left(I-B\right)^{-1}(A+D)(B+A+D)u^n\\
&+& \Big[\left(I-B(\eps_n)\right)^{-1}\Bigl(A(\eps_n)+D(\eps_n)\Bigr)\Bigl(B(\eps_n)+A(\eps_n)+
D(\eps_n)\Bigr)\\
&-& \left(I-B\right)^{-1}(A+D)(B+A+D)\Big]u^n.
\end{array}
\ee
Let us show that the map 
\beq\label{e-cont}
\begin{array}{ll}
\eps\in[0,\eps_0)\mapsto \left(I-B(\eps)\right)^{-1}\left(A(\eps)+
D(\eps)\right)\left(B(\eps)+A(\eps)+
D(\eps)\right)\in\LL(\CC^1,\CC)\\
\mbox{ is locally Lipschitz continuous. }
\end{array}
\ee

First we show that the map 
\beq\label{map1}
\eps\in[0,\eps_0)\mapsto (I-B(\eps))^{-1}\in\LL(\CC^1,\CC)
\mbox{ is locally Lipschitz continuous. }
\ee
 Indeed, take $\eps^\prime,\eps^{\prime\prime}\le\eps_0$,
$f\in\CC^1$, and $u^\prime,u^{\prime\prime}\in\CC^1$ such that
$$
u^\prime=B(\eps^\prime) u^\prime+f,\quad u^{\prime\prime}=
B(\eps^{\prime\prime})u^{\prime\prime}+f.
$$
Hence,
$$
u^\prime-u^{\prime\prime}=\left(B(\eps^\prime)-B(\eps^{\prime\prime})\right) u^\prime
+ B(\eps^{\prime\prime})\left(u^\prime-u^{\prime\prime}\right),
$$
or, the same,
\beq\label{eq_in_C}
u^\prime-u^{\prime\prime}=\left(I-B(\eps^{\prime\prime})\right)^{-1}
\left(B(\eps^\prime)-B(\eps^{\prime\prime})\right) u^\prime.
\ee
Since $u^\prime,u^{\prime\prime}\in\CC^1$, then on the account of  \reff{Bdef_eps}, we have
\begin{eqnarray}
\lefteqn{
\left(B(\eps^\prime)-B(\eps^{\prime\prime})\right) u^\prime
=
\Biggl(
\left(c_1^{\eps^\prime}(0,x,t)-c_1^{\eps^{\prime\prime}}(0,x,t)\right)
u_2^\prime(0,\om_1^{\eps^\prime}(0))}\nonumber\\
&&+c_1^{\eps^{\prime\prime}}(0,x,t)
\int_0^1\d_2u_2^\prime\left(0,\al\om_1^{\eps^\prime}(0)+(1-\al)\om_1^{\eps^{\prime\prime}}(0)\right)\,d\al
\left(\om_1^{\eps^\prime}(0)-\om_1^{\eps^{\prime\prime}}(0)\right),\nonumber\\
&&
\left(c_2^{\eps^\prime}(1,x,t)-c_2^{\eps^{\prime\prime}}(1,x,t)\right)
u_1^\prime(1,\om_1^{\eps^\prime}(1))\label{cont_B}\\
&&+c_2^{\eps^{\prime\prime}}(1,x,t)
\int_0^1\d_2u_1^\prime\left(0,\al\om_2^{\eps^\prime}(1)+(1-\al)\om_2^{\eps^{\prime\prime}}(1)\right)\,d\al
\left(\om_2^{\eps^\prime}(1)-\om_2^{\eps^{\prime\prime}}(1)\right)
\Biggr).\nonumber
\end{eqnarray}
Thus, the equation \reff{eq_in_C} is well defined in $\CC$.
Note that, by \reff{contr3}, there is a constant $c>0$ not depending on $\eps^\prime$ and $f$ such that 
\beq\label{contr5}
\left\|u^\prime\right\|_{\CC^1}\le c\|f\|_{\CC^1}.
\ee
Now, taking into account the definition \reff{cdef_ieps} and the bounds \reff{0_eps}, \reff{contr3},  
and \reff{contr5},   from \reff{cont_B}  we derive the estimate
$$
\|u^\prime-u^{\prime\prime}\|_{\CC}\le c\left|\eps^\prime-\eps^{\prime\prime}\right|\|f\|_{\CC^1}
$$
with a new constant $c$ independent of $\eps^\prime$, $\eps^{\prime\prime}$, and $f$.
Therewith  \reff{map1}
is proved.

To finish with \reff{e-cont}, we take into account the definitions
\reff{Bdef1}--\reff{Ddef_eps} of the operators $B(\eps)$, $A(\eps)$, $D(\eps)$ and get
$B(\eps), A(\eps), D(\eps)\in\LL(\CC^1)$  as well as $B(\eps)$, $A(\eps)$, $D(\eps)\in\LL(\CC^1,\CC)$
are locally Lipschitz continuous in $\eps$. Hence, we have
$
 \left(A(\eps)+
D(\eps)\right)\left(B(\eps)+A(\eps)+
D(\eps)\right)\in\LL(\CC^1)
$ 
and also the map
$
\eps\in[0,\eps_0)\mapsto \left(A(\eps)+
D(\eps)\right)\left(B(\eps)+A(\eps)+
D(\eps)\right)\in\LL(\CC^1,\CC)
$ 
is locally Lipschitz continuous. This finishes the proof of \reff{e-cont}.

Now, returning to \reff{contr4}, we conclude that the second summand in the right-hand side 
tends to zero in $\CC$ as $n\to\infty$, 
while a subsequence of $\left(I-B\right)^{-1}(A+D)(B+A+D)u^n$
(the first summand) converges in $\CC$. 
Therefore, a subsequence of $u^n$ 
(further denoted by $u^n$ again)  
converges to a function $u\in\CC$. Our aim now is to show that
passing to the limit in \reff{contrr1} gives
\beq\label{contr6}
u=\left(B+A+D\right)u,
\ee
the equality being true in $\CC$. This means that $u\in\KK_u$, where 
$\KK_u$ the vector space of all  weak solutions to
\reff{1s}--\reff{3s} with $f=0$. Hence,
 by Theorem~\ref{thm:reg}, the function  $u$ has
 $\CC^{1}$ regularity. 
On the other hand, due to \reff{contr0},
$\|u\|_{\CC^1}=1$, a contradiction with $u\in\KK_u$  $u\in\CC^1$, and  $\dim\KK_u=0$. 

We are left with proving  \reff{contr6}. Above we showed 
 that $u^n$ and, hence, the right-hand side of  \reff{contrr1} converges in $\CC$. Thus, we are 
done if we prove that
$$
B(\eps_n)u^n\to Bu,\quad A(\eps_n)u^n\to Au,\quad D(\eps_n)u^n\to Du\quad \mbox{in } \CC
\mbox{ as } n\to\infty.
$$
Let us prove the first convergence (similar proof is true for the other two). We have
$$
B(\eps_n)u^n-Bu=(B(\eps_n)-B)u^n+B(u^n-u).
$$
The first summand in the right-hand side tends to zero in $\CC$ thanks to \reff{contr0} and the locally
Lipschitz continuity of the map 
$
\eps\in[0,\eps_0)\mapsto B(\eps)\in\LL(\CC^1,\CC)
$ 
and the second one -- due to the convergency of $u^n$ in $\CC$. 
In this way we reach the desired convergence.
Finally, passing to the limit as $n\to\infty$
in \reff{contrr1} gives \reff{contr6}.

We therefore proved that the problem \reff{1s_eps}--\reff{3s_eps}
is uniquely solvable in $\CC^1$ for each $\eps\le\eps_0$. 
To finish the proof of the claim (i), it remains to prove  the estimate \reff{apr}. 
If this is not the case, then 
there exist sequences $\eps_n\to_{n\to\infty}~0$ and $u^n\in \CC^1$ satisfying \reff{contr0}
 and
\beq
 u^n-B(\eps_n)u^n-A(\eps_n)u^n-D(\eps_n)u^n\to 0 \mbox{ as } n\to\infty \mbox{ in } \CC^1.\nonumber
\ee
We proceed similarly to the above with \reff{contrr1} in place
of \reff{contr1}, up to getting a contradiction.

{\bf (ii)} Fix an arbitrary $i\le k-1$. 
Theorem  \ref{thm:Fredh} states the Fredholmness of $I-B_i(\eps)-A_i(\eps)-D_i(\eps)$
 from $\CC^1$ to $\CC$ for all sufficiently small~$\eps$, while
Theorem~\ref{thm:reg}  strengths this result to
 the Fredholmness of  $I-B_i(\eps)-A_i(\eps)-D_i(\eps)$  from $\CC^1$ to $\CC^1$.
This means that the desired statement will be proved whenever we show 
the injectivity of  $I-B_i(\eps)-A_i(\eps)-D_i(\eps)$ from $\CC^1$ to $\CC^1$.

 Assume, conversely, that there exist sequences $\eps_n\to 0$ and $u^n\in \CC^1$ 
fulfilling \reff{contr0} and
\beq\label{contr9}
u^n=B_i(\eps_n)u^n+A_i(\eps_n)u^n+D_i(\eps_n)u^n.
\ee
Due to the choice of $\eps_0$, the operators $I-B_i(\eps)\in\LL(\CC)$ and $I-B_i(\eps)\in\LL(\CC^1)$ 
are invertible and satisfy the estimates
\beq\label{contr7}
\left\|(I-B_i(\eps))^{-1}\right\|_{\LL(\CC)}=O(1),\quad 
\left\|(I-B_i(\eps))^{-1}\right\|_{\LL(\CC^1)}=O(1)
\ee
uniformly in $\eps\le\eps_0$. This entails, in particular, that 
 there are  constants $c>0$ and 
$\eps_0$ such that
for all $\eps\le\eps_0$ and $f\in\CC^1$ the continuously differentiable solution 
to the equation $u=B_i(\eps)u+f$ satisfies the apriory estimate
\beq\label{contr8}
\left\|u\right\|_{\CC^1}\le c\|f\|_{\CC^1}.
\ee
Moreover,  from \reff{contr9} we get
\beq\label{contr10}
\begin{array}{ccl}
u^n
&=& \left(I-B_i\right)^{-1}(A_i+D_i)(B_i+A_i+D_i)u^n\\
&+& \Big[\left(I-B_i(\eps_n)\right)^{-1}\Bigl(A_i(\eps_n)
+D_i(\eps_n)\Bigr)\Bigl(B_i(\eps_n)+A_i(\eps_n)+
D_i(\eps_n)\Bigr)\\
&-& \left(I-B_i\right)^{-1}(A_i+D_i)(B_i+A_i+D_i)\Big]u^n,
\end{array}
\ee
where $B_i=B_i(0)$, $A_i=A_i(0)$, $D_i=D_i(0)$.

Next, we need that the maps
$$
\eps\in[0,\eps_0)\mapsto \left(I-B_i(\eps_n)\right)^{-1}\in\LL(\CC^1,\CC)
$$ 
and
$$
\eps\in[0,\eps_0)\mapsto\left(A_i(\eps)+
D_i(\eps)\right)\left(B_i(\eps)+A_i(\eps)+
D_i(\eps)\right)\in\LL(\CC^1,\CC)
$$ 
are locally Lipschitz continuous. Proceeding analogously to the proof of \reff{map1},
we state that the former follows from
  the smoothness assumptions on the data as well as from
the estimates \reff{contr7}, \reff{contr8}, and \reff{0_eps}, while the latter
is a consequence of the facts that
$B_i(\eps), A_i(\eps), D_i(\eps)\in\LL(\CC^1)$  and that $B_i(\eps), A_i(\eps), D_i(\eps)\in\LL(\CC^1,\CC)$
are locally Lipschitz continuous in $\eps$. This entails also the desired property
$
 \left(A_i(\eps)+
D_i(\eps)\right)\left(B_i(\eps)+A_i(\eps)+
D_i(\eps)\right)\in\LL(\CC^1).
$

Now, accordingly to \reff{contr10},  a subsequence of $u^n$
(below denoted by $u^n$ again)  
converges in $\CC$ to a function $u\in\CC$. To get a contradiction, similarly to the above,
it remains to show that $u$ satisfies the equation
\beq\label{eqi}
u=\left(B_i+A_i+D_i\right)u
\ee
in $\CC$. We derive the latter from \reff{contr9} applying the convergency of $u^n$ to $u$
in $\CC$
as well as the locally Lipschitz continuity of the maps
$
\eps\in[0,\eps_0)\mapsto B_i(\eps)\in\LL(\CC^1,\CC),
$
$
\eps\in[0,\eps_0)\mapsto A_i(\eps)\in\LL(\CC^1,\CC),
$ 
and
$
\eps\in[0,\eps_0)\mapsto D_i(\eps)\in\LL(\CC^1,\CC).
$
Equality \reff{eqi}  means that 
$u\in\ker\left(I-B_i-A_i-D_i\right)$  and, hence,
 by Theorem~\ref{thm:reg}, the function  $u$ has
 $\CC^{1}$ regularity. 
On the other hand, due to \reff{contr0},
$\|u\|_{\CC^1}=1$, a contradiction with $u\in\ker\left(I-B_i-A_i-D_i\right)$,
  $u\in\CC^1$, and  
\beq\label{5.1}
\dim\ker\left(I-B_i-A_i-D_i\right)=0.
\ee 
Briefly speaking, the latter follows from the facts that $\KK_u=0$ and that any solution to~\reff{contr6}
has $\CC^k$-regularity, what, on the account of the proof of Theorem~\ref{thm:reg},
 necessarily leads to the unique solvability of \reff{eqi} in $\CC$
for every $i\le k$. 

To prove \reff{5.1} in details, we will use the induction on $i$. To prove the {\it base case} $i=1$,
given $f\in \CC^k$, let $u$ be the unique solution to the equation
\beq\label{5.3}
u=\left(B+A+D\right)u+Rf.
\ee
By Theorem~\ref{thm:reg}, $u\in\CC^k$. Taking into account the proof of Theorem~\ref{thm:reg},
the unique solvability of \reff{5.3} in $\CC^1$ is equivalent to the unique solvability 
in $\CC$ of the system
\beq\label{5.2}
\begin{array}{rcl}
u&=&\left(B+A+D\right)u+Rf,\\
v&=&\left(B_1+A_1+D_1\right)v+R_1u
\end{array}
\ee
with respect to $(u,v)$. Here $v=\d_tu$. As $\K_u=0$, the first equation in \reff{5.2}
is uniquely solvable in $\CC$, and we have
$$
u=\left[I-B-A-D\right]^{-1}Rf.
$$
Now it remains to note that the system \reff{5.2} is uniquely solvable in $\CC$ iff
the operator $I-B_1-A_1-D_1\in\LL(\CC)$ is bijective from $\CC$ to $\CC$. 
The base case is therewith 
proved. Given $2\le i\le k-1$, assume that $\dim\ker\left(I-B_j-A_j-D_j\right)=0$ in $\CC$ for all 
 $2\le j\le i-1$ ({\it induction assumption}) and prove that 
$\dim\ker\left(I-B_i-A_i-D_i\right)=0$ in $\CC$ ({\it induction step}).
Since the solution $u$ to  \reff{5.3} belongs to $\CC^k$, then it satisfies the equation 
\beq
w=\left(B_i+A_i+D_i\right)w+R_iu,\nonumber
\ee
where $w=\d_t^iu$ and the operator $R_i$ given by \reff{Gidef}, \reff{NQ}, and \reff{NS}
is linear operator of $u$, $\d_tu,\dots,\d_t^{i-1}u$. Note that $u$, $\d_tu,\dots,\d_t^{i-1}u$
are the continuous functions uniquely determined due to the induction assumption. Again,
because of  \reff{5.3} is uniquely solvable in $\CC^k$ and $[R_iu](x,t)$ is a known
continuous function, the equation \reff{5.3} is uniquely solvable with respect to $w$ in $\CC$
iff $I-B_i-A_i-D_i\in\LL(\CC)$ is bijective from $\CC$ to  $\CC$. The induction step is proved.

The proof of the estimate \reff{apr1} follows the same line as the proof of
\reff{apr}, what finishes the proof of this claim. 

Claim {\bf (iii)} easily follows from Claims (i) and (ii)
and  the proof of Theorem~\ref{thm:reg}.
\end{proof}

Write $B_0(\eps)=B(\eps)$,  $A_0(\eps)=A(\eps)$, and $D_0(\eps)=D(\eps)$.

\begin{lem}\label{lem:loc_Lip}
 {\bf (i)} The map
$$
\eps\in[0,\eps_0)\mapsto \left[I-B_i(\eps)-A_i(\eps)-D_i(\eps)\right]^{-1}\in\LL(\CC^1,\CC)
$$ 
is locally Lipschitz continuous for all non-negative integers $i\le k-1$. 

 {\bf (ii)} The map
$$
\eps\in[0,\eps_0)\mapsto \left[I-B(\eps)-A(\eps)-D(\eps)\right]^{-1}\in\LL(\CC^{i+1},\CC^i)
$$ 
is locally Lipschitz continuous for all non-negative integers $i\le k-1$. 
\end{lem}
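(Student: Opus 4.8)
Put $L_i(\eps)=I-B_i(\eps)-A_i(\eps)-D_i(\eps)$, so that $L_0(\eps)=I-B(\eps)-A(\eps)-D(\eps)$ is the operator occurring in part (ii) as well. The plan is to reduce both statements to the resolvent identity
\[
L_i(\eps')^{-1}-L_i(\eps'')^{-1}=L_i(\eps')^{-1}\bigl(L_i(\eps'')-L_i(\eps')\bigr)L_i(\eps'')^{-1},\qquad \eps',\eps''\le\eps_0,
\]
combined with two facts. First, $L_i(\eps)^{-1}$ is bounded, uniformly in $\eps\le\eps_0$, on each of the relevant spaces: this is Lemma~\ref{cl:iso}(ii),(iii) for $\CC^1$ and for $\CC^i$, $\CC^{i+1}$, while for $\CC$ it follows from the index-zero Fredholmness on $\CC$ (Theorem~\ref{thm:Fredh} and its counterpart for the $B_i$-operators, cf.\ the proof of Theorem~\ref{thm:reg}) together with triviality of the kernels, exactly as in the proofs of \reff{apr}, \reff{apr1}, and \reff{apr2}. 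Second, each summand $B_i(\eps)$, $A_i(\eps)$, $D_i(\eps)$ depends locally Lipschitz continuously on $\eps$ once it is regarded as an operator that \emph{loses one derivative}, i.e.\ as a map $\CC^{m+1}\to\CC^{m}$. This second fact is the core of the argument and is the higher-regularity analogue of the estimate \reff{0_eps} and of the computation \reff{cont_B}.

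For part (i), fix $i\le k-1$ and $\eps',\eps''\le\eps_0$, and read the resolvent identity as the composition $\CC^{1}\to\CC^{1}\to\CC\to\CC$: first $L_i(\eps'')^{-1}$ (bounded on $\CC^1$ by \reff{apr1}), then $L_i(\eps'')-L_i(\eps')$ (which loses one derivative), then $L_i(\eps')^{-1}$ (bounded on $\CC$). We are thereby reduced to the bound $\|L_i(\eps'')-L_i(\eps')\|_{\LL(\CC^{1},\CC)}=O(|\eps'-\eps''|)$. Splitting this difference into the corresponding differences of $B_i$, $A_i$, $D_i$, the $B_i$-term is handled as in \reff{cont_B}: the factors $c_j^{i\eps}$ and the characteristics $\om_j^{\eps}(\cdot)$ differ by $O(|\eps'-\eps''|)$ in $\CC$ by \reff{0_eps}, and the single term in which the $\eps$-difference occurs inside the argument of $u$ is linearized by the fundamental theorem of calculus, producing a factor $\d_2u\in\CC$ (available since $u\in\CC^{1}$). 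For $A_i$ and $D_i$ one inserts the integration-along-characteristics representations of Section~\ref{sec:Fredh} (Fubini, change of variables) and estimates the kernel differences the same way, using that $\om_j^\eps$, $c_j^{i\eps}$, $d_j^{i\eps}$, $b_{jk}^{\eps}$, $a^\eps$ all differ by $O(|\eps'-\eps''|)$. This proves part (i); its case $i=0$ coincides with part (ii) for $i=0$.

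For part (ii), fix $i\le k-1$ and $\eps',\eps''\le\eps_0$, and read the same identity as the composition $\CC^{i+1}\to\CC^{i+1}\to\CC^{i}\to\CC^{i}$, the outer factors being bounded by Lemma~\ref{cl:iso}(iii) applied with the indices $i$ and $i+1$ (both $\le k$, since $i\le k-1$). It remains to show $\|L_0(\eps'')-L_0(\eps')\|_{\LL(\CC^{i+1},\CC^{i})}=O(|\eps'-\eps''|)$. This is the order-$i$ version of the previous paragraph: one differentiates the differences $B(\eps')u-B(\eps'')u$, $A(\eps')u-A(\eps'')u$, $D(\eps')u-D(\eps'')u$ in $(x,t)$ up to order $i$, and by the Leibniz rule each resulting term either carries the $\eps$-difference on a kernel factor or on a characteristic curve, in which case it is $O(|\eps'-\eps''|)\,\|u\|_{\CC^{i}}$ by the order-$i$ analogues of \reff{0_eps} (valid because \reff{k-smooth_dep} permits $i$ derivatives in $(x,t)$ to fall on $\om_j^\eps$, $c_j^{i\eps}$, $d_j^{i\eps}$, $b_{jk}^{\eps}$, $a^\eps$, using Gronwall estimates for \reff{char}), or carries one derivative transferred onto $u$ by the fundamental-theorem linearization, in which case it is $O(1)\,\|\d_2 u\|_{\CC^{i}}$ and is controlled because $u\in\CC^{i+1}$. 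The integral structure of $A$ and $D$ guarantees that the boundary contributions arising from differentiating under the integral sign again involve at most $i$ derivatives of $u$.

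The main obstacle is entirely contained in this ``second fact'': verifying term by term that every mixed $(x,t)$-derivative of $B(\eps')-B(\eps'')$, $A(\eps')-A(\eps'')$, $D(\eps')-D(\eps'')$ up to order $i$ is bounded by $C\,|\eps'-\eps''|\,\|u\|_{\CC^{i+1}}$ uniformly in $\eps',\eps''\le\eps_0$ (and, with one derivative fewer, the same for $B_i$, $A_i$, $D_i$). This is a routine but lengthy extension of the computations already carried out for \reff{0_eps} and in the proof of Lemma~\ref{cl:iso}: all the $\eps$-dependent quantities are given by the explicit formulas \reff{char}, \reff{cdef}, \reff{ddef}, \reff{tilde_dx}, \reff{tilde_dt} built from $a^\eps$, $a_k^\eps$, $r_k^\eps$, hence depend on $\eps$ as smoothly as \reff{k-smooth_dep} allows, and the representations of Section~\ref{sec:Fredh} keep the number of derivatives reaching $u$ under control throughout.
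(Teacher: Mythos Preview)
Your argument for part (i) is essentially the paper's own: the paper simply remarks that (i) ``may be proved in much the same way as the proof of \reff{map1}'' using Lemma~\ref{cl:iso}(i)--(ii), and the proof of \reff{map1} is precisely the resolvent-identity computation you write out, with the factorization $\CC^{1}\to\CC^{1}\to\CC\to\CC$ implicit in the joint use of \reff{contr3} and \reff{contr5}. Your remark that the uniform $\LL(\CC)$-bound on $L_i(\eps)^{-1}$ follows by rerunning the compactness argument behind \reff{apr}--\reff{apr2} is correct and fills in a point the paper leaves implicit.

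For part (ii) you take a genuinely different route. The paper proves (ii) \emph{by induction on $i$}, combining Lemma~\ref{cl:iso}(iii) with part (i) of the present lemma: if $u^{\eps}=L_0(\eps)^{-1}f$ with $f\in\CC^{i+1}$, then $v^{\eps}=\partial_t^{i}u^{\eps}$ satisfies the variational equation $v^{\eps}=L_i(\eps)^{-1}R_i(\eps)u^{\eps}$, so Lipschitz dependence of $u^{\eps}$ in $\CC^{i}$ is reduced to Lipschitz dependence in $\CC^{i-1}$ (induction hypothesis), Lipschitz continuity of $R_i(\eps)$, and part (i) applied to $L_i(\eps)^{-1}\in\LL(\CC^{1},\CC)$. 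This recycles the variational-equation machinery built in the proof of Theorem~\ref{thm:reg}. You instead read the resolvent identity directly at level $\CC^{i+1}\to\CC^{i}$ and estimate $\|L_0(\eps')-L_0(\eps'')\|_{\LL(\CC^{i+1},\CC^{i})}$ head-on. Your route is conceptually cleaner and makes (ii) independent of (i), but it front-loads all the work into the higher-order analogues of \reff{0_eps} and \reff{cont_B} for each of $B$, $A$, $D$; the paper's inductive route avoids repeating those kernel estimates by delegating to the already-proved case $i=0$ and the structural link between $L_0$ and the operators $L_i$. Both arguments are valid under \reff{k-smooth_dep}.
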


Claim (i) may be proved in much the same way as the proof
 of \reff{map1} and uses now
Lemma~\ref{cl:iso} (i)--(ii). Claim (ii)  may be proved by induction on $i$ using
Lemma~\ref{cl:iso} (iii) and Lemma~\ref{lem:loc_Lip} (i).

To finish the proof of  Theorem~\ref{thm:dep}, what is left is to show that the map
$
\eps\in[0,\eps_0)\mapsto u^\eps\in \CC^{k-\ga-1}
$
is $C^\ga$-smooth for all non-negative integers $\ga\le k-1$.
The proof of this statement will be by induction on $k$.

{\bf Base case:} 
 {\it The map 
$\eps\in [0,\eps_0)\mapsto u^\eps\in \CC$ is continuous.}
The claim follows from Lemma~\ref{lem:loc_Lip} (i) with $i=0$.

Let $k\ge 2$.

{\bf Induction assumption:} {\it The map 
$\eps\in [0,\eps_0)\mapsto u^\eps\in \CC^{k-\ga-2}$ is $C^{\ga}$-smooth for all 
 non-negative integers $\ga\le k-2$.}

{\bf Induction step:} {\it The map 
$\eps\in [0,\eps_0)\mapsto u^\eps\in \CC^{k-\ga-1}$ is $C^{\ga}$-smooth for all 
 non-negative integers $\ga\le k-1$.} 
To prove the induction step, again we use induction but this time on~$\ga$.

Checking the {\it base case $\ga=0$}  we have to show that
$v^\eps=\d_t^{k-1}u^\eps$ depends continuously on~$\eps$.
Since $u^\eps\in\CC^k$, we have  $v^\eps\in\CC^{1}$. Hence $v^\eps$
 fulfills the system \reff{1s_i+1}
with $i=k-1$ pointwise where all the data involved are not fixed
at $\eps=0$ now, but are $\eps$-dependent. The latter is equivalent to the 
integral system   \reff{rep1_i}--\reff{rep2_i} with  $i=k-1$ and with 
$\eps$-dependent coefficients involved or, the same, to the operator equation
\beq\label{abstr_i1}
v^\eps=B_{k-1}(\eps)v^\eps+A_{k-1}(\eps)v^\eps+D_{k-1}(\eps)v^\eps+R_{k-1}(\eps)u^\eps,
\ee 
where
\beq
\begin{array}{lll}
[R_i(\eps)u](x,t)= \Bigg(c_1^{i\eps}(0,x,t)[S_{i}(\eps)u^\eps]_1(\om_1^\eps(0))
+\int_0^x d_1^{i\eps}(\xi,x,t) \left[Q_i(\eps)u^\eps\right]_1
(\xi,\om_1^\eps(\xi))\,d\xi, \nonumber\\
\qquad\qquad\qquad\qquad c_2^{i\eps}(1,x,t)[S_{i}(\eps)u^\eps]_2(\om_2^\eps(1))
-\int_1^x d_2^{i\eps}(\xi,x,t)\left[Q_i(\eps)u^\eps\right]_2
(\xi,\om_2^\eps(\xi))\,d\xi\Bigg),\nonumber\\
 \left[Q_i(\eps)u^\eps\right](x,t)=\Bigl(f_{1i}^\eps\left(x,t,u^\eps,\d_tu^\eps,\dots,\d_t^{i-1}u^\eps\right)-
\left[P_i(\eps)u^\eps\right](x,t),
\nonumber\\ 
\qquad\qquad\qquad\qquad f_{2i}^\eps\left(x,t,u^\eps,\d_tu^\eps,\dots,\d_t^{i-1}u^\eps\right)-\left[P_i(\eps)u^\eps\right](x,t)\Bigr),
\nonumber\\
\left[S_{i}(\eps)u^\eps\right](t)=\Biggl(2\d_t^i\left(a^\eps(0,t)r_0(t)\right)[G(\eps)u^\eps](t)+
\d_t^{i-1}\left(a^\eps(0,t)r_0^\eps(t)(u_1^\eps(0,t)+u_2^\eps(0,t))\right),\nonumber\\
\qquad\qquad\qquad\quad-
 2\d_t^i(a^\eps(1,t)r_1^\eps(t))\left[G(\eps)u^\eps\right](t)-
\d_t^{i-1}\left[a^\eps(1,t)r_1^\eps(t)(u_1^\eps(0,t)+u_2^\eps(0,t))\right]\nonumber\\
\displaystyle \qquad\qquad\qquad\quad-\d_t^{i-1}\int_0^x\d_t\left(\frac{a^\eps(1,t)r_1^\eps(t)}{a^\eps(\xi,t)}\right)
\left(u_1^\eps(\xi,t)-u_2^\eps(\xi,t)\right)\,d\xi\Biggr),\label{NS_eps}\nonumber\\
\displaystyle [P_i(\eps)u^\eps](x,t) = \d_t^ia_3^\eps\left[Iu^\eps\right](t)+\frac{1}{2}\d_t^{i-1}
\left[a_3^\eps(u_1^\eps(0,t)+u_2^\eps(0,t))\right]\nonumber\\\displaystyle \qquad\qquad\qquad\quad+
\d_t^{i-1}\int_0^x\d_t\left(\frac{a_3^\eps(x,t)}{a^\eps(\xi,t)}\right)
\frac{u_1^\eps(\xi,t)-u_2^\eps(\xi,t)}{2}\,d\xi\nonumber
\end{array}
\ee
and the operators $B_i(\eps)$,  $A_i(\eps)$,  $D_i(\eps)$, $G(\eps)\in \CC\mapsto\CC$
are defined by \reff{Bdef_eps}, \reff{Aidef_eps}, \reff{Didef_eps}, \reff{G_eps}, and \reff{F_eps}.
The continuously differentiable functions $f_{ji}^\eps$ are defined by the same rules as
$f_{ji}$ but only with $\eps$-perturbed coefficients involved.
On the account of Lemma~\ref{cl:iso} (ii) and the fact that $\left[R_{k-1}(\eps)u^\eps\right](x,t)\in\CC^1$
for each $\eps\le\eps_0$, we are able to rewrite \reff{abstr_i1} in $\CC^1$ as
$$
v^\eps=\left[I-B_{k-1}(\eps)-A_{k-1}(\eps)v^\eps-D_{k-1}(\eps)\right]^{-1}R_{k-1}(\eps)u^\eps.
$$ 
Recall that $R_{k-1}$ is a certain linear operator of $u^\eps,\d_tu^\eps,\d_t^{k-2}u^\eps$.
First we state that the map
$
\eps\in[0,\eps_0)\mapsto R_{k-1}(\eps)\in\LL(\CC^1)
$ 
is locally Lipschitz continuous, what follows from the definition of $R_{k-1}(\eps)$ 
and the regularity assumptions on the initial data.
Further we  use the induction assumption on $k$ allowing to conclude 
that  $u^\eps,\d_tu^\eps,\d_t^{k-2}u^\eps$ are locally Lipschitz continuous in $\eps$.
Finally, applying
Lemma~\ref{lem:loc_Lip} (ii) with $i=k-1$ entails the locally Lipschitz continuity of the map
$
\eps\in[0,\eps_0)\mapsto \left[I-B_{k-1}(\eps)-A_{k-1}(\eps)-D_{k-1}(\eps)\right]^{-1}\in\LL(\CC^1,\CC),
$ 
what finishes the proof of the base case $\ga=0$.

Now for an arbitrary fixed $1\le\ga\le k-1$ assume that the map 
$\eps\in[0,\eps_0)\mapsto u^\eps\in C_T^{k-\ga}$ 
is $\CC^{\ga-1}$-smooth in $\eps$ ({\it induction assumption}) and prove that 
the map 
$\eps\in[0,\eps_0)\mapsto u^\eps\in C_T^{k-\ga-1}$ is 
$C^{\ga}$-smooth  in~$\eps$ ({\it induction step}). 
Let $w^\eps$ be a classical solution to the problem
\beq\label{1s_i_ga}
\begin{array}{rcl}
\displaystyle\left(\partial_t  - a^\eps\partial_x\right)w_1^\eps &= &
 \displaystyle - b_{11}^\eps w_1^\eps - b_{12}^\eps w_2^\eps - a_3F(\eps)u^\eps
+  \tilde F(\eps)u^\eps + \d_\eps^{\ga-1}\left(\d_\eps a\d_x v_1^\eps\right)
\\
\displaystyle
\left(\partial_t  + a^\eps\partial_x\right)w_2^\eps& =&
-\displaystyle b_{21}^\eps w_1^\eps - b_{22}^\eps w_2^\eps -a_3F(\eps)u^\eps+
 \tilde F(\eps)u^\eps - \d_\eps^{\ga-1}\left(\d_\eps a\d_x v_2^\eps\right),
\end{array}
\ee
\beq\label{2s_i_ga}
w_j^\eps(x,t) = w_j^\eps(x,t+T),\quad  x \in [0,1], \; j=1,2,
\ee
\beq\label{3s_i_ga}
\begin{array}{lll}
\displaystyle
w_1^\eps(0,t) =\displaystyle w_2^\eps(0,t)+2a^\eps(0,t)r_0^\eps(t)\left[Gw^\eps\right](t)
+2\d_\eps^{\ga-1}\left[\d_\eps\left(a^\eps(0,t)r_0^\eps(t)\right)\left[Gw^\eps\right](t)\right]\\
\displaystyle
w_2^\eps(1,t) =\displaystyle w_1^\eps(1,t)-2a^\eps(1,t)r_1^\eps(t)\left[Fw^\eps\right](1,t)
-2\d_\eps^{\ga-1}\left[\d_\eps\left(a^\eps(1,t)r_1^\eps(t)\right)\left[Fw^\eps\right](1,t)\right]
\end{array}
\ee
or, the same, the problem
\beq\label{abstr_i3}
w^\eps=B(\eps)w^\eps+A(\eps)w^\eps+D(\eps)w^\eps+Q_{\ga-1}(\eps)u^\eps+\tilde R_{\ga-1}(\eps)u^\eps,
\ee 
where
\begin{eqnarray*}
\left[Q_{\ga-1}(\eps)u^\eps\right](x,t)&=&\Biggl(
\int_0^x d_1^\eps(\xi,x,t)\left[\d_\eps^{\ga-1}\left(\d_\eps a^\eps\d_xu_1^\eps\right)\right]
(\xi,\om_1^\eps(\xi))\,d\xi, 
\nonumber\\ &&
\int_1^x d_2^\eps(\xi,x,t)\left[\d_\eps^{\ga-1}\left(\d_\eps a^\eps\d_xu_2^\eps\right)\right]
(\xi,\om_2^\eps(\xi))\,d\xi
\Biggr),\nonumber\\
\left[\tilde R_{\ga-1}(\eps)u^\eps\right]_1(x,t)&=& 2c_1^\eps(0,x,t)\d_\eps^{\ga-1}\left[\d_\eps
\left(a^\eps(0,t)r_0^\eps(t)\right)\left[Gu^\eps\right](t)\right]\nonumber\\ &&+
\int_0^x d_1^\eps(\xi,x,t)\left[\tilde F_{\ga-1}(\eps)u^\eps\right](\xi,\om_1(\xi))\,d\xi,\nonumber\\ 
\left[\tilde R_{\ga-1}(\eps)u^\eps\right]_2(x,t)&=&
-2c_2^\eps(1,x,t)\d_\eps^{\ga-1}\left[\d_\eps
\left(a^\eps(1,t)r_1^\eps(t)\right)\left[Gu^\eps\right](t)\right]\nonumber\\ &&-
\int_1^x d_2^\eps(\xi,x,t)\left[\tilde F_{\ga-1}(\eps)u^\eps\right](\xi,\om_2(\xi))\,d\xi,\nonumber\\
\left[\tilde F_{\ga-1}(\eps)u\right]_1(x,t)&=&\Biggl(
-\d_\eps^{\ga-1}\left[\d_\eps b_{11}^\eps w_1^\eps + \d_\eps b_{12}^\eps w_2^\eps + a_3F(\eps)u^\eps\right],\nonumber\\
&& -\d_\eps^{\ga-1}\left[\d_\eps b_{21}^\eps w_1^\eps + \d_\eps b_{22}^\eps w_2^\eps + a_3F(\eps)u^\eps\right]
\Biggr).\nonumber
\end{eqnarray*}

First show that, given $\eps\le\eps_0$, the equation \reff{abstr_i3} is well-defined in $\CC^{k-\ga}$.
By Lemma~\ref{cl:iso}~(iii), the operator $I-B(\eps)-A(\eps)-D(\eps)$ is bijective from
 $\CC^{k-\ga}$ to $\CC^{k-\ga}$.
Hence, it remains to show that 
$\left[Q_{\ga-1}(\eps)+\tilde R_{\ga-1}(\eps)\right]u^\eps\in\CC^{k-\ga}$.
The induction assumption on $\ga$ implies that the map
\beq\label{Lip_R}
\eps\in[0,\eps_0)\mapsto \tilde R_{\ga-1}(\eps)u^\eps
\in\CC^{k-\ga} \mbox{ is locally Lipschitz continuous. }
\ee
In particular, given $\eps\le\eps_0$, $\left[\tilde R_{\ga-1}(\eps)u^\eps\right](x,t)\in\CC^{k-\ga}$ 
as desired.
In order to prove that $Q_{\ga-1}(\eps)u^\eps\in\CC^{k-\ga}$, it is sufficient to show that 
$\d_\eps^{\ga-1}\d_xu^\eps\in\CC^{k-\ga}$ or, the same, that $\d_\eps^{\ga-1}\d_tu^\eps\in\CC^{k-\ga}$.
For $\ga=1$ the statement is obvious. Using the induction argument, assume that 
$\d_\eps^{\ga-2}\d_xu^\eps\in\CC^{k-\ga+1}$  or, the same, that $\d_\eps^{\ga-2}\d_xu^\eps\in\CC^{k-\ga+1}$ 
for an arbitrary fixed $2\le\ga\le k-1$
and prove that $\d_\eps^{\ga-1}\d_xu^\eps\in\CC^{k-\ga}$. For $w^\eps=\d_\eps^{\ga-1}u^\eps$
we have the system \reff{1s_i_ga}--\reff{3s_i_ga} or the system \reff{abstr_i3}
with $\ga$ replaced by $\ga-1$. Using the induction assumption that
$\d_\eps^{\ga-2}\d_xu^\eps\in\CC^{k-\ga+1}$ and Lemma~\ref{cl:iso} (iii), we get 
$\d_\eps^{\ga-1}u^\eps\in\CC^{k-\ga+1}$. Hence,  $\d_\eps^{\ga-1}\d_xu^\eps\in\CC^{k-\ga}$
as desired. Consequently, the equation \reff{abstr_i3} determines uniquely $w^\eps$ as an element of
$\CC^{k-\ga}$.

Further we state that $w^\eps$ given by  \reff{abstr_i3}  is locally Lipschitz continuous
in $\CC^{k-\ga-1}$.
Indeed, due to the induction assumption on $\ga$ the map
$
\eps\in[0,\eps_0)\mapsto u^\eps\in\CC^{k-\ga}
$
is $\CC^{\ga-1}$-smooth in $\eps$, hence, the map 
$
\eps\in[0,\eps_0)\mapsto \d_xu^\eps\in\CC^{k-\ga-1}
$
is $\CC^{\ga-1}$-smooth in $\eps$, from what follows that the map 
$
\eps\in[0,\eps_0)\mapsto \d_\eps^{\ga-1}\d_xu^\eps\in\CC^{k-\ga-1}
$
is continuous in $\eps$. Therefore, the map 
\beq\label{Lip_Q}
\eps\in[0,\eps_0)\mapsto Q(\eps)u^\eps\in\CC^{k-\ga-1} \mbox{ is locally Lipschitz continuous. }
\ee
Combining statements \reff{Lip_R} and \reff{Lip_Q} with Lemma~\ref{lem:loc_Lip} (ii)
leads to the desired statement that $w^\eps$ 
 is locally Lipschitz continuous
in $\CC^{k-\ga-1}$.

Finally, we show that $w^\eps$
determined by  \reff{abstr_i3} is in fact $\d_\eps^{\ga}u^\eps$. To this end, we
adopt the convention that $Q_{-1}(\eps)+\tilde R_{-1}(\eps)=R(\eps)$. 
Let us consider 
 \reff{abstr_i3} with $\ga-1$ in place of $\ga$ at some $\eps\le\eps_0$ and $\eps^\prime\le\eps_0$.
We thus  have the following equalities in $\CC^{k-\ga-1}$:
\beq\label{5.5}
\begin{array}{rcl}
\d_\eps^{\ga-1}u^\eps&=&\left[B(\eps)+A(\eps)+D(\eps)\right]\d_\eps^{\ga-1}u^\eps+Q_{\ga-2}(\eps)u^\eps+
\tilde R_{\ga-2}(\eps)u^\eps,\\
\d_\eps^{\ga-1}u^{\eps^\prime}&=&\left[B(\eps^\prime)+A(\eps^\prime)+D(\eps^\prime)\right]\d_\eps^{\ga-1}
u^{\eps^\prime}+Q_{\ga-2}(\eps^\prime)u^{\eps^\prime}+
\tilde R_{\ga-2}(\eps^\prime)u^{\eps^\prime}.
\end{array}
\ee 
Using  \reff{abstr_i3} and  \reff{5.5}, we get
\begin{eqnarray*}
\lefteqn{
\left[I-B(\eps)-A(\eps)-D(\eps)\right]
\left[\d_\eps^{\ga-1}u^{\eps^\prime}-\d_\eps^{\ga-1}u^{\eps}-w^\eps(\eps^\prime-\eps)\right]
}\\
&&
=\left[I-B(\eps)-A(\eps)-D(\eps)\right]
\Bigl[\d_\eps^{\ga-1}u^{\eps^\prime}-\d_\eps^{\ga-1}u^{\eps}\\
&&
-(\eps^\prime-\eps)\left[I-B(\eps)-A(\eps)-D(\eps)\right]^{-1}
\left(Q_{\ga-1}(\eps)+
\tilde R_{\ga-1}(\eps)\right)u^\eps\Bigr]\\
&&
=\left[
\left(B(\eps)-B(\eps^\prime)\right)+\left(A(\eps)-A(\eps^\prime)\right)+
\left(D(\eps)-D(\eps^\prime)\right)
\right]\d_\eps^{\ga-1}u^{\eps^\prime}\\
&&
+\left[Q_{\ga-2}(\eps)+
\tilde R_{\ga-2}(\eps)\right]\left(u^{\eps}-u^{\eps^\prime}\right)
+(\eps^\prime-\eps)\bigg[\left(Q_{\ga-2}(\eps)-Q_{\ga-2}(\eps^\prime)\right)\\ &&+
\left(\tilde R_{\ga-2}(\eps)-\tilde R_{\ga-2}(\eps^\prime)\right)
-\left(Q_{\ga-1}(\eps)+\tilde R_{\ga-1}(\eps)\right)
\bigg]u^{\eps}
\end{eqnarray*}
The first summand is $o(|\eps^\prime-\eps|)$ in $\CC^{k-\ga-1}$, as the maps 
$\eps\in[0,\eps_0)\mapsto B(\eps)z\in\CC^{k-\ga-1}$,
$\eps\in[0,\eps_0)\mapsto A(\eps)z\in\CC^{k-\ga-1}$, and
$\eps\in[0,\eps_0)\mapsto D(\eps)z\in\CC^{k-\ga-1}$
are  locally Lipschitz continuous for all $z\in\CC^{k-\ga}$.
The second summand is $o(|\eps^\prime-\eps|)$ in $\CC^{k-\ga-1}$, since
the maps 
$\eps\in[0,\eps_0)\mapsto Q_{\ga-2}(\mu)u^\eps\in\CC^{k-\ga}$ and
$\eps\in[0,\eps_0)\mapsto \tilde R_{\ga-2}(\mu)u^\eps\in\CC^{k-\ga}$
are $C^1$-smooth for all $\mu\le\eps_0$ and $u^\eps\in\CC^k$.
The third summand is $o(|\eps^\prime-\eps|)$ in $\CC^{k-\ga-1}$, due to
the maps 
$\eps\in[0,\eps_0)\mapsto Q_{\ga-2}(\eps)z\in\CC^{k-\ga}$ and
$\eps\in[0,\eps_0)\mapsto \tilde R_{\ga-2}(\eps)z\in\CC^{k-\ga}$
are $C^1$-smooth for all $\mu\le\eps_0$ and $z\in\CC^k$ and due to the estimate
$$
\left\|Q_{\ga-1}(\eps)z\right\|_{\CC^{k-\ga-1}}+\left\|\tilde R_{\ga-1}(\eps)z\right\|_{\CC^{k-\ga-1}}=
O\left(\left\|z\right\|_{\CC^{k-\ga-1}}\right),
$$
being uniform in $\eps\le\eps_0$ and $z\in\CC^k$.

The proof of Theorem~\ref{thm:dep} is therewith complete.


\begin{thebibliography}{50}

\bibitem{Appell} J. M. Appell, A. S. Kalitvin, P. P. Zabrejko, {\it Partial Integral Operators and Integro-Differential Equations},
Pure Appl. Math.  {\bf 230}, Marcel Dekker, 2000.



\bibitem{Grimmer} R. Grimmer, M. He, Differentiability with respect to parameters of semigroups, 
{\it Semigroup Forum} {\bf 59} (1999), 317--333.

\bibitem{He} M. He, Smoothness property on parameters of periodic systems,
{\it J. Inequal. Appl.} {\bf ID 106} (2011), 12 p.

\bibitem{KA} L. V. Kantorovich, G. P. Akilov, {\it Functional Analysis, Second Edition}, 
Pergamon Press, 1982.

\bibitem{Kim}  Wan Se Kim,   Double-periodic boundary value problem for nonlinear 
dissipative hyperbolic equations,
{\it J. Math. Anal. Appl.}  {\bf 145} (1990), 1--16.



                            
\bibitem{Kim1}  Wan Se Kim, Multiplicity results for periodic solutions of semilinear 
dissipative hyperbolic equations with coercive nonlinear term,
{\it J. Korean Math. Soc.}   {\bf 38} (2001), 853--881.

\bibitem{Kim2}  Wan Se Kim, A mean condition on forcing term for multiplicity of periodic solutions 
for nonlinear 
dissipative hyperbolic equations,
{\it J. Korean Math. Soc.}   {\bf 42} (2005), 191--202.

\bibitem{kmit_tel}I. Kmit, Fredholm solvability of a periodic Neumann problem for a 
linear telegraph equation, {\it Ukrainian Math. J.} {\bf 65} (2013), N 3, 423–-434.

\bibitem{kmit} 
  I. Kmit.  Smoothing effect and Fredholm property for 
first order hyperbolic PDEs.  In: {\it Pseudo-differential operators, generalized functions and asymptotics}.
  Operator Theory: Advances and Applications, Basel: Birkh\"auser   
{\bf 231} (2013), 219–-238.

\bibitem{KR2} I. Kmit, L. Recke, Fredholmness and smooth dependence for linear hyperbolic 
periodic-Dirichlet problems, 
{\it J. Differ. Equations} {\bf 252} (2012), 1962--1986.

\bibitem{hopf} I. Kmit, L. Recke,
Hopf bifurcation for semilinear dissipative hyperbolic
systems.  {\it J.  Differ. Equations}  {\bf 257} (2014), 264--309.

\bibitem{KR3} I. Kmit, L. Recke, Periodic solutions to dissipative hyperbolic systems. I: 
Fredholm solvability of linear problems.  Preprint {\bf 999} (2013), DFG Research Center {\sc Matheon}.

\bibitem{ober}
M.~Oberguggenberger,
{\it Multiplication of Distributions and Applications to
Partial Differential Equations}.
 Pitman Research Notes in Mathematics {\bf 259},
Longman, 1992.


\bibitem{Rudakov} I. A. Rudakov, Periodic solution of a nonlinear telegraph equation, 
{\it Moscow Univ. Math. Bulletin} {\bf 48} (1993), 1--4.

\bibitem{Wang}  Fanglei Wang, Yukun An,   Existence and multiplicity results of positive doubly periodic solutions for  nonlinear telegraph system,
{\it J. Math. Anal. Appl.}  {\bf 349} (2009), 30--42.

\bibitem{RauchReed81} J.~Rauch, 
M.~Reed, Jump discontinuities of
semilinear, strictly hyperbolic systems in two variables:
Creation and propagation,
{\it Comm. Math. Phys.} {\bf 81} (1981), 203--227.

\bibitem{Vejvoda} O. Vejvoda et al.,
{\it Partial Differential Equations: Time-Periodic Solutions}. Sijthoff
Noordhoff, 1981.


\end{thebibliography}
\end{document}